\documentclass[11pt]{amsart}
 \usepackage{graphicx}
\usepackage{xstring}
\usepackage{forloop}
\usepackage{bbm, amsmath, amsfonts, amscd, latexsym, amsthm, amssymb, graphicx, mathrsfs, comment}
\usepackage[colorlinks=true]{hyperref}

\usepackage{abstract}
\usepackage{mathtools}
\usepackage{caption}
\usepackage[dvipsnames]{xcolor}

\usepackage{tikz-cd}
\tikzset{
  symbol/.style={
    draw=none,
    every to/.append style={
      edge node={node [sloped, allow upside down, auto=false]{$#1$}}}
  }
}

\makeatletter
\newif\if@check@engine  \@check@enginetrue 
\makeatother
\usepackage[usenames,dvipsnames]{pstricks}

\usepackage{booktabs}

\newtheorem{theor}{\hspace{1cm}{\sc Theorem}}[section]
\newtheorem{utver}[theor]{\hspace{1cm}{\sc Proposition}}
\newtheorem{sledst}[theor]{\hspace{1cm}{\sc Corollary}}
\newtheorem{lemma}[theor]{\hspace{1cm}{\sc Lemma}}
\newtheorem{conj}[theor]{\hspace{1cm}{\sc Conjecture}}

\newtheorem*{utver*}{\hspace{1cm}{\sc Proposition}}
\theoremstyle{definition}

\newtheorem{defin}[theor]{\hspace{1cm}{\sc Definition}}
\newtheorem{exa}[theor]{\hspace{1cm}{\sc Example}}
\newtheorem{observ}[theor]{\hspace{1cm}{\sc Observation}}
\newtheorem{rem}[theor]{\hspace{1cm}{\sc Remark}}
\newtheorem{prb}[theor]{\hspace{1cm}{\sc Problem}}

\newcommand{\sgn}{\mathop{\rm sgn}\nolimits}

\newcommand{\Vol}{\mathop{\rm Vol}\nolimits}

\newcommand{\id}{\mathop{\rm id}\nolimits}

\newcommand{\conv}{\mathop{\rm conv}\nolimits}

\newcommand{\Trop}{\mathop{\rm Trop}\nolimits}

\newcommand{\MV}{\mathop{\rm MV}\nolimits}

\newcounter{idx}

\newcommand{\rotraise}[1]{
  \StrLen{#1}[\slen]
  \forloop[-1]{idx}{\slen}{\value{idx}>0}{
    \StrChar{#1}{\value{idx}}[\crtLetter]
    \IfSubStr{tlQWERTZUIOPLKJHGFDSAYXCVBNM}{\crtLetter}
      {\raisebox{\depth}{\rotatebox{180}{\crtLetter}}}
      {\raisebox{1ex}{\rotatebox{180}{\crtLetter}}}}
}

\newcommand{\rg}{\mathfrak{g}}
\newcommand{\ru}{\mathfrak{u}}
\newcommand{\rv}{\mathfrak{v}}
\newcommand{\rw}{\mathfrak{w}}

\DeclareMathOperator{\GCD}{GCD}

\addtolength{\oddsidemargin}{-0.6in}
\addtolength{\evensidemargin}{-0.6in}
\addtolength{\textwidth}{1.2in}

\renewcommand{\l}[1]{{\color{blue} \sf  #1}}

\renewcommand{\emph}[1]{{\it {\color{NavyBlue} #1}}}

\def\R{\mathbb R}

\def\Z{\mathbb Z}

\def\C{\mathbb C}
\def\CC{({\mathbb C}^\star)}

\def\CP{\mathbb C\mathbb P}

\emergencystretch=7pt
\begin{document}

\begin{center}{\Large \sc Bernstein-Kouchnirenko-Khovanskii with a symmetry}

\vspace{3ex}

{\sc Alexander Esterov and Lionel Lang}
\end{center}

\begin{abstract}
A generic polynomial $f(x,y,z)$ with a prescribed Newton polytope defines a symmetric spatial curve $f(x,y,z)=f(y,x,z)=0$. We study its geometry: the number, degree and genus of its irreducible components, the number and type of singularities, etc.
and discuss to what extent these results generalize to higher dimension and more complicated symmetries. 

As an application, we characterize generic one-parameter families of complex univariate polynomials, whose Galois group differs from the complete symmetric group.
\end{abstract}
{\let\thefootnote\relax\footnotetext{\noindent 
{\bf 2020 Mathematics Subject Classification:} 14M25, 14D05, 12F10, 52B20
}

\tableofcontents

\section{Introduction}

{\bf Complete intersections with a symmetry.} We study the interface between two active but distinct subject matters: the study of generic complete intersections in the algebraic torus (known as the theory of Newton polytopes), and a growing collection of special varieties (in various parts of geometry with various motivations), which can be represented as complete intersections with an action of a finite symmetry group.

A {\it generic complete intersection} is given by a system of polynomial equations with monomials $x^a=x_1^{a_1}\cdot\ldots\cdot x_n^{a_n}$ from a prescribed finite set $A\subset\Z^n, a\in A$, and indeterminate coefficients. Geometry of such a special variety $V$ is naturally related to the geometry of its {\it Newton polytope} -- the convex hull of the set of monomials $A\subset\Z^n$.

This relation (known as the theory of Newton polytopes) is thoroughly studied by many authors starting from \cite{k75}, \cite{bernst}, \cite{varch} and \cite{kh75}, with two major motivations. On the one hand, many important special varieties (such as incidence varieties in enumerative  geometry or members of classifications in singularity theory), can be represented as generic complete intersections, and thus studied using Newton polytopes. 

On the other hand, a key method to construct a variety with prescribed geometry is to find a lattice polytope $N$ with respective geometric properties, so that the theory of Newton polytopes imparts these properties to the generic complete intersection with the Newton polytope $N$. Noteworthy examples of such constructions include the patchworking technique to construct real algebraic varieties with prescriped topology, leading e.g. to important advances in Hilbert's 16'th problem \cite{viropatch}, \cite{sturmfpatch}, \cite{itenbpatch}, and the reflexive polytope technique to construct Fano and Calabi--Yau varieties with prescribed geometry, leading to important advances in e.g. birational geometry and combinatorial mirror symmetry \cite{bb94}, \cite{ckp}.

\smallskip

All of the cited topics apply the theory of Newton polytopes and, at the same time, see a growing interest in introducing symmetry groups into their settings, starting from the simplest group $\Z/2\Z$ (one involution). This leads to complete intersections (or just hypersurfaces) that are generic modulo this symmetry: see e.g. \cite{os} and \cite{bru} in real algebraic geometry and e.g. \cite{asing}, \cite{ssing}, \cite{gr19} in singularity theory. An important way to construct examples and describe members of classifications of nearly rational and Calabi--Yau varieties are double covers over generic weighted or toric complete intersections (see e.g. \cite{pukhl} and \cite{yau}): they can be themselves represented as toric complete intersections, which are generic, but only modulo a $\Z_2$ symmetry. Weighted complete intersections with further symmetries, important for 
these topics, appear e.g. in \cite{bayley}, \cite{fnp} (several involutions) and 
\cite[Section 6]{prokh},  \cite[Section 5]{ejm} (higher symmetric groups). 

There are obviously many more sources of such complete intersections that are generic modulo symmetry, in different parts of geometry. Some more are mentioned at the end of the paper, but we do not aim to give an exhaustive survey. The main point for us is that such symmetric complete intersections are rarely generic enough to suit the classical Newton polytope theory: this motivates to adapt the theory itself to the presence of symmetry.

\smallskip

Such an adaptation looks ambitious for systems of arbitrarily many equations in arbitrarily many variables (and all the more so for arbitrary symmetry groups), so we start with a model example: two equations $f(x,y,z)=g(x,y,z)=0$, whose coefficients are indeterminate modulo the simplest $\Z_2$ symmetry $f(x,y,z)=g(y,x,z)$. Its careful study alone already gives nontrivial applications and even ideas on the possible structure of the answer in the general case.

Namely, the geometry of the spatial curve $f(x,y,z)=f(y,x,z)=0$ for a generic polynomial $f$ with the Newton polytope $N$, turns out to uniformly depend on $N$ except for some families of exceptional ``thin'' polytopes. These families can be explicitly classified and strongly resemble excluded minors in combinatorics (though we were unable to formally reduce the question to any known version of the Robertson--Seymour theorem or Rota conjecture). 

In contrast to the pure combinatorics, where sets of excluded minors are notoriously large and nonnconstructive, our model example leads to an optimistic conjecture (Remark \ref{bigq00}) on how to constructively describe ``excluded Newton polytopes'' for symmetric complete intersections of arbitrary dimension and codimension.
(On the combinatorial geometry side, this may give a new useful example of how the geometric nature of the objects studied simplifies the situation with excluded minors.)

\smallskip

Besides illustrating this conjecture, our spatial curve example illustrates one general application: it is unrelated to the aforementioned ones, because it is irrelevant to the classical Newton polytope theory without symmetries. It comes from Galois theory. 

There is a very general and beautiful trick, dating back at least to Camille Jordan, on how to prove that a Galois group $G$ is full symmetric. If $G$ contains a transposition, then it is enough to prove that it is doubly transitive. The later property, for the Galois group of a covering $p:U\to V$, means that the fiber square of $p$ is just transitive, i.e. its total space $W$ is irreducible.

For Galois groups in enumerative geometry, this trick was popularized in \cite{harris}, and later led to important advances in many classical enumerative problems like \cite{sw} and \cite{hk}, sometimes leading to a complete solution \cite{compos}. In this setting, the total space of the covering $U\to V$ is the incidence variety of the enumerative problem, lying in the Cartesian product $S\times V\to V$. As mentioned above, it can be frequently described as a complete intersection $f_\bullet(s,v)=0$ in $S\times V$. Then its fiber product is the symmetric complete intersection $f_\bullet(s,v)=f_\bullet(s',v)=0$ in $S\times S\times V$ (and the same for higher $k$-incidence varieties in $S^k\times V$, having similar importance in the study of Galois groups).

Such complete intersections for many enumerative problems could be targeted by the anticipated ``Newton polytope theory with symmetries''. We illustrate this by applying our spatial curve example to study Galois groups of univariate fewnomials depending on a parameter.

Polynomials of this kind were studied by many authors from the perspective of Galois theory, especially in finite characteristics. A remarkable project in this direction initiated by Abhyankar in \cite{abh1} lead to many beautiful results such as \cite{abh2}. It can borrow the name of one of its papers: ``Nice equations for nice groups'', 
and represents some interesting simple groups as Galois groups of trinomials, whose coefficients are monomials of the parameter.
We study the same problem over the complex numbers, but in full generality --- for polynomials composed of an arbitrary finite collection of monomials.

\vspace{1ex}

{\bf Symmetric spatial curves.}
Consider the complex torus $T\simeq\CC^3$ and its character lattice $M\simeq\Z^3$. A finite set $A\subset M$ gives rise to the space $\C^A$ of (Laurent) polynomials supported at $A$: $$f(u,v,w)=\sum_{a=(a_1,a_2,a_3)\in A} c_au^{a_1}v^{a_2}w^{a_3}$$  (or $f(x)=\sum_{a\in A}c_ax^a,\, x\in T$, in a coordinate-free form).

For two given support sets $A$ and $B\subset M$, the classical results by Kouchnirenko, Bernstein, Khovanskii and others describe the geometry of the smooth spatial curve $f=g=0$ given by generic equations $(f,g)\in\C^A\times\C^B$: particularly the number, degree and genus of the components of this curve: see \cite{bernst}, \cite{kh75} and \cite{kh15}. 

Let us now impose the symmetry with respect to the involution $I:T\to T$ given by the formula $(u,v,w)\mapsto (v,u,w)$ in a coordinate system $(u,v,w):T\xrightarrow{\sim}\CC^3$, and the corresponding involution $I:M\to M$ (denoted by the same letter despite a small abuse of notation).

From now on, we assume that $B:=I(A)$ and $g(x):=f(I(x))$. In this case, the aforementioned classical results almost never apply to the symmetric curve $f=g=0$ even for generic $f\in \C^A$: to start with, the curve is not smooth in general.

The big picture is as follows (see Figure \ref{fig:intro}): for most of finite sets $A\subset M$, the curve $f=g=0$ consists of several disjoint ``simple'' smooth components (such as the one given by the equations $f(x)=0$ and $x=I(x)$) that we call diagonal, and one more ``complicated'' smooth component, intersecting the diagonal ones without tangency and called the proper component. The diagonal components are by definition {\it simple} in the sense that they are isomorphic to a generic plane curve with a certain prescribed Newton polygon; their geometry is thus clear from the classical theory of Newton polytopes. The proper component is not of this form, and its study (most notably, proving its irreducibility) constitutes a major part of our paper.

The exceptional sets $A\subset M$, for which the above picture fails, can be classified explicitly and studied separately. In particular, all of their components turn out to be ``simple'' in the aforementioned sense.

\begin{rem}\label{bigq00}
The latter observation may be of crucial importance for further research on symmetric complete intersections. A priori, we have two seemingly unrelated sets:

a) $\{$support sets $A$ for which the symmetric curve has more than one non-diagonal component$\}$; this set is relatively difficult to describe (to a large extent, our paper is devoted to this description).

b) $\{$support sets $A$ for which every component of the symmetric curve is contained in a proper subtorus coset, i.e. is simple in the aforementioned sense$\}$; this set is relatively easy to describe, see Proposition \ref{pflat}.

Unexpectedly, upon describing them both, we observe {\it a posteriori} that (a) is contained in (b), and even almost coincide with it. If this inclusion could be proved {\it a priori}, it would greatly simplify our work. If this inclusion extends to higher dimensions and codimensions or larger symmetry groups (see Conjecture \ref{conjflat}), this would allow to extend our results to those more general settings.
\end{rem}

\begin{figure}
    \centering
    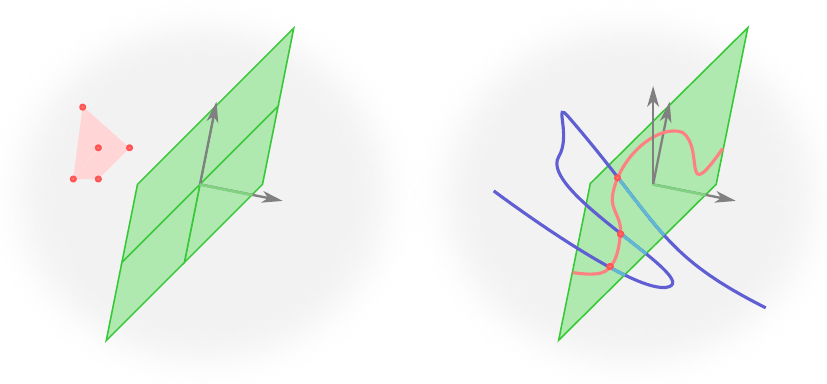
    \caption{On the left, the lattice of monomials with the support $A$ and its symmetric $I(A)$ with respect to the invariant plane $M^I$. On the right, the corresponding symmetric curve $f=f\circ I=0$ in the complex torus, with its proper and diagonal parts}
    \label{fig:intro}
\end{figure}

\vspace{1ex} 

{\bf The structure of the paper.} The background for our story is the theory of Newton polytopes, i.e. the classical facts about the subvarieties defined in algebraic tori by systems of generic equations with prescribed support sets (colloquially known as the  Kouchnirenko--Bernstein--Khovanskii toolkit).

Introducing the symmetry to this classical story in Section \ref{sre}, we first classify the exceptional symmetric curves and describe their geometry case by case, and secondly, describe uniformly the singularities and irreducible components of all the other (non-exceptional) ones. 
The facts regarding singularities and components are then proved in Sections \ref{sconn} and \ref{sirredproof} respectively. 
They are valid under some explicit genericity assumption on $f$, called $I$-nondegeneracy (Definition \ref{defInondeg}).
In the proofs, we make use of a formula for the number of singularities and the Euler characteristics of a symmetric complete intersection (Theorem \ref{thsingnum}), which was obtained in \cite[Theorem 5.6]{crit}, for arbitrary dimension of the ambient algebraic torus.

Finally, in Section \ref{snext}, we discuss the numerous possible generalizations of these facts to the ambient torus of higher dimension, symmetric complete intersections of higher codimension, more complicated symmetries (leading e.g. to questions on geometry of zero loci of Schur polynomials), and, importantly, other ground fields beyond $\C$.

Prior to all of this, Section \ref{sgalois} is devoted to an application of symmetric curves to Galois theory: we characterize generic one-parameter families of complex univariate polynomials, whose Galois group is a complete symmetric group.

\vspace{1ex} 

{\bf An application: Galois groups of fewnomials depending on a parameter.}
Polynomials of this kind were studied by many authors from the perspective of Galois theory, especially in finite characteristics. A remarkable project in this direction initiated by Abhyankar in \cite{abh1} lead to many beautiful results such as \cite{abh2}. It can borrow the name from one of its papers: ``Nice equations for nice groups'' 
and deals with Galois groups of trinomials, whose coefficients are monomials of the parameter.

We study the same problem over the complex numbers, but in full generality --- for polynomials with arbitrary supports.
More specifically, we study how one can permute the non-zero roots of a univariate Laurent polynomial $\varphi(x)=\sum_{j=n}^N c_jx^j$, whose coefficients $c_j=c_j(t)$ are generic Laurent polynomials of a parameter $t$ with prescribed support sets $A_j\subset\Z^1$, $j=n,\ldots, N$. As we vary $t\in\C$ along all possible loops (avoiding the finitely many bifurcation values of $t$ at which the polynomial $\varphi$ has less than $N-n$ roots), the corresponding permutations of the $N-n$ non-zero roots of $\varphi$ form a subgroup $G\subset S_{N-n}$, referred to as the {\it Galois group} of $\varphi$.

Alternatively, given a generic curve $C$ in the $(x,t)$-plane, whose equation $\varphi(x,t)=0$ is supported at the prescribed set $A=\bigcup_{j=n}^N\{j\}\times A_j\subset\Z^2$, the projection of $C$ to the $t$-axis is a ramified covering, and we study its monodromy group $G$.

In order to express $G$ in terms of $A$ (or the $A_j$'s), let $d$ be the maximal integer such that $\varphi$ can be represented as $x^m\psi(x^d)$, with $\psi$ a Laurent polynomial. Combinatorially, $d$ is the GCD of the pairwise differences of all $j$'s such that $c_j(t)\not\equiv 0$ (i.e. $A_j\ne\varnothing$).

\begin{rem}\label{remnonfull}
Since the non-zero roots of $\varphi(x)=x^m\psi(x^d)$ split into $\frac{N-n}{d}$ necklaces of length $d$ (invariant under multiplication by $\sqrt[d]{1}$), at most we can expect to permute these necklaces. 

Thus, whenever $N-n>d>1$ (more than one non-trivial necklace) or $N-n=d>2$ (the length of the necklace is greater than 2), the Galois group is not full symmetric.

Similarly, if $d=1$, and $A$ is contained in an affine line, then $\varphi(x,t)$ up to a monomial multiplier can be represented as $\psi(xt^a)$ for a polynomial $\psi$ and integer $a$. In this case, the covering is trivial, and so is the monodromy.\end{rem}

These obvious obstructions for the Galois group to be full symmetric are the only ones.

\begin{theor}\label{thm:galois}
The Galois group $G$ of a generic polynomial $\varphi\in\C^A$ equals $S_{N-n}$ if and only if 

-- $d=1$, and $A$ is not in an affine line, or 

-- $d=N-n=2$, and $A$ consists of two points whose barycenter is not in the lattice.
\end{theor} 

See the next section for the proof and the exact assumption of general position on $\varphi$ (Definition \ref{defgaloisgen} and Remark \ref{remnondeggalois}). Moreover, in \cite{el} we compute the Galois group for an arbitrary support set $A$. However, that paper relies more heavily on topological methods (such as braids), making it difficult to generalize to arbitrary ground fields and to several variables. For the techniques of the present paper, this looks more realistic: see the very end of the paper for a sketch of extension to fields different from $\C$, and Section 5 of \cite{crit} for what is already known for several variables. For now, applying Theorem \ref{thm:galois} to the rational function $\varphi(x,t)=q_1(x)+q_2(x)t$, we get the following.
\begin{sledst}
Let $\psi(x)=q_1(x)/q_2(x)$ be a rational function with generic numerator and denominator $q_i\in\C^{B_i}$, $B_i\subset\Z^1$. Then the monodromy group of the ramified covering $\psi:\CP^1\to\CP^1$ equals the full symmetric group if and only if the set $B_1\cup B_2$ can not be shifted to a proper sublattice or can be shifted to $\{0,2\}$.
\end{sledst}

We start with proving Theorem \ref{thm:galois}, illustrating how symmetric curves in $\C^3$ come into play.

\section{Generic families of polynomials whose Galois group is full symmetric}\label{sgalois}

{\it Proof of Theorem \ref{thm:galois}.} 
If $A$ does not satisfy the conditions in the theorem, then the Galois group is not full symmetric by Remark \ref{remnonfull}. If $A$ satisfies the second condition, then the Galois group clearly equals $S_2$. So now, it remains to treat the case $d=1$, $N-n>2$, and $\varphi$ is generic.

We study the ramified covering given by the projection $\pi$ of the curve $\varphi(x,t)=0$ in the $(x,t)$-plane to the $t$-axis.
The fiber product of $\pi$ with itself is the projection $\pi^2$ of the curve $$\varphi(x_1,t)=\varphi(x_2,t)=0\eqno{(*)}$$ in the $(x_1,x_2,t)$-space to the $t$-axis.

Then, by Definition \ref{def:diag&properparts}, Observation \ref{ex:diagcomponents&singularities} and Theorem \ref{thintroirr} below, the symmetric curve $(*)$ consists of a single diagonal component (given by the equations $x_1=x_2$ and $\varphi(x_1)=0$) and one more irreducible component for the proper part.

Thus, for any generic $t$, every two points $(x_1,x_2,t),\, x_1\ne x_2$, and $(x'_1,x'_2,t),\, x'_1\ne x'_2$, of the proper part of $(*)$ can be connected with a path $\gamma$ that avoids the singular locus of $(*)$ and whose image $\tilde \gamma$ under $\pi^2$ avoids the branching points of $\pi^2$.
In particular, the monodromy of $\pi^2$ along the loop $\tilde\gamma$ sends the point $(x_1,x_2,t)$ of the fiber of $\pi^2$ over $t$ to the point $(x'_1,x'_2,t)$ of the same fiber.

In terms of the initial covering $\pi$, this means that the monodromy of $\pi$ along $\tilde\gamma$ sends $x_1$ to $x'_1$ and $x_2$ to $x'_2$. We have proved that any pair of distinct points $x_1\ne x_2$ in the fiber of $\pi$ over $t$ can be sent to any other pair of distinct points $x'_1\ne x'_2$ in the same fiber by a suitable element of the monodromy group of $\pi$ (in this situation the monodromy group is said to be 2-transitive).

In particular, the monodromy group contains the transposition $(x_1,x_2)$ if and only if it contains any other transposition $(x'_1,x'_2)$, i.e. is full symmetric. Thus it is enough to prove the following lemma. \hfill$\square$
\begin{lemma}\label{ltranspos}
For a generic polynomial $\varphi$ supported at the set $A\subset\Z^2$ not contained in an affine line and with $d=1$, the ramified covering $\pi$ has at least one branching point whose local monodromy is a transposition.
\end{lemma}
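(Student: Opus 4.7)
\smallskip\noindent\textbf{Proof plan.} The idea is to produce a Morse (non-degenerate) critical point of the projection $\pi$ on the smooth plane curve $C=\{\varphi=0\}\subset\CC^2$: at any such point two sheets of $C$ meet with a simple tangency, and the local monodromy around the corresponding value of $t$ is precisely a transposition of the two colliding roots, which is what the lemma asks for.

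The critical points of $\pi$ in $\CC^2$ coincide with the solutions of the system $\varphi=\partial_x\varphi=0$. By the Bernstein--Kouchnirenko formula recalled in Section \ref{sappend}, for a generic $\varphi\in\C^A$ the number of such solutions in $\CC^2$ is a positive multiple of the mixed volume $\MV(\Delta,\Delta')$, where $\Delta=\conv(A)$ is the Newton polygon of $\varphi$ and, by translation-invariance of the mixed volume, $\Delta'$ may be taken to be $\conv(A')$ with $A':=\{a\in A:a_1\neq 0\}$.

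The main step is to verify that $\MV(\Delta,\conv(A'))>0$. Since $A$ is not contained in an affine line, $\Delta$ is $2$-dimensional, and hence $\MV(\Delta,\cdot)$ is positive whenever its second argument has dimension at least $1$. It therefore suffices to exhibit two distinct $x$-exponents in $A'$. If no monomial of $\varphi$ has $x$-exponent $0$, this is immediate from $d=1$. Otherwise, suppose toward contradiction that $A'$ has a single $x$-exponent $k\neq 0$: then all $x$-exponents of $A$ lie in $\{0,k\}$, the GCD of their pairwise differences equals $|k|$, so $d=1$ forces $k=\pm 1$; but then $N-n\leq 1$, contradicting $N-n>2$.

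Granted non-emptiness of the critical set, a standard transversality argument (or the nondegeneracy machinery developed elsewhere in the paper) shows that for $\varphi$ in a Zariski-open subset of $\C^A$ every critical point of $\pi$ is Morse and the critical values are pairwise distinct, so the local monodromy around each critical value is a single transposition. The sole non-routine ingredient is the mixed-volume positivity, handled by the short combinatorial dichotomy above.
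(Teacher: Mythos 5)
Your reduction of the lemma to the positivity of $\MV\bigl(\conv(A),\conv(A')\bigr)$ does not work: that mixed volume is an \emph{upper} bound for the number of solutions of $\varphi=\partial_x\varphi=0$ in $\CC^2$, not a lower bound, and in the present situation the bound is typically not attained. Bernstein's theorem gives equality only when the system is BKK-nondegenerate at infinity, and here the two polynomials are not independent: whenever $c_N$ (or $c_n$) is not a monomial, the Newton polygon of $\varphi$ has a vertical edge off the $t$-axis, and on that edge the leading parts of $\varphi$ and $\partial_x\varphi$ are both proportional to $c_N(t)x^{N}$, so the corresponding face system has a one-dimensional common zero locus. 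Solutions then genuinely escape to infinity. Concretely, for $\varphi=1+x+(1+t)x^{3}$ one computes $\MV\bigl(\conv(A),\conv(A')\bigr)=2$, yet $\partial_x\varphi=3(1+t)x^{2}+1$ forces $1+t=-1/(3x^{2})$ and then $\varphi=1+\tfrac{2}{3}x$, giving exactly \emph{one} critical point in the torus; the second intersection point predicted by the mixed volume sits at $x=\infty$ over the root of $c_3(t)=1+t$. Even more starkly, for $\varphi=1+(1+t)x$ the mixed volume is $1$ while the system $\varphi=\partial_x\varphi=0$ has no solutions at all. So "$\MV>0$ hence a critical point exists" is a non sequitur, and your combinatorial dichotomy, while correct, establishes the positivity of the wrong quantity.

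The paper circumvents exactly this issue by never invoking Bernstein's count for the degenerate system $(\varphi,\partial_x\varphi)$. Instead it computes the Euler characteristic of $\bar C$ by the BKK formula (Theorem \ref{bkk1}) applied to the single equation $\varphi=0$, compares it with the Riemann--Hurwitz count of ramification, and solves for the number of type (iii) critical points:
\[
\sharp_3=\Vol_\Z(A)-Q\,\Vol_\Z(A_N)-q\,\Vol_\Z(A_n).
\]
The subtracted terms are precisely the contributions of the branch points at $x=0$ and $x=\infty$ that your mixed volume fails to remove, and the positivity of this \emph{corrected} quantity under the hypotheses $d=1$, $N-n>1$, $A$ not in a line is the actual content of the lemma. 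To salvage your approach you would either have to prove nondegeneracy at infinity of $(\varphi,\partial_x\varphi)$ (false in general) or compute and subtract the defect along the vertical edges, at which point you recover the paper's formula. Your final paragraph (Morse critical points and distinct critical values for generic $\varphi$) is fine and matches the paper's $t$-nondegeneracy argument via transversality to the discriminant.
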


Before we prove the lemma, let us discuss exact conditions of general position on the polynomial $\varphi$ (Definition \ref{defgaloisgen}), which assure that the Galois group is full symmetric.

Let 
$n,n+q,N-Q$ and $N$ be the two smallest and the two largest numbers in the set $B:=\{j\,|\,A_j\ne\varnothing\}$.

Assuming that the closure $\bar C\subset\CP^1\times\C^*$ is smooth, its projection to the $t$-axis $\C^*$ has 
the following critical points:

(i) a point $(0,t)$ for $c_n(t)=0$, of order at least $q$, if $q>1$;

(ii) a point $(\infty,t)$ for $c_N(t)=0$, of order at least $Q$, if $Q>1$;

(iii) a point $(x,t)$ for $\varphi(x,t)=\partial f/\partial x(x,t)=0,\,x\in\C^*$, of order at least 2.

(Note that the order of a critical point $(x_0,t_0)$ for us is the multiplicity of the root $(x_0,t_0)$ of the function $t-t_0$ restricted to the curve $\bar C$; in particular, order 1 means no critical point.)

\begin{defin}\label{defgaloisgen} A polynomial $\varphi\in\C^A$ is said to be $t$-nondegenerate, if:

1) The closure of $C$ in the $A$-toric variety is smooth and transversal to the orbits;

2) all of the listed critical points of the projection $\bar C\to\C^*$ have minimal possible orders;

3) the critical values at any two of these points are distinct, with the necessary exception that every critical point $(x,t)$ of type (iii) has $d-1$ more critical points of the form $(\sqrt[d]{1}\cdot x,t)$ have the same critical value $t$.
\end{defin}

\begin{lemma}\label{galoisgen} 
1) $t$-nondegenerate polynomials $\varphi$ are generic, i.e. form a non-empty Zariski open subset in $\C^A$.

2) The ramified coverings $\bar C\to\C^*$ have the same monodromy group for all $t$-nondegenerate polynomials $\varphi$. 
\end{lemma}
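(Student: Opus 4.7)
\textbf{Proof plan for Lemma \ref{galoisgen}.}

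For part (1), the plan is to check that each of the three defining conditions of $t$-nondegeneracy is expressed by the non-vanishing of some polynomial(s) in the coefficients of $\varphi$, and that each is individually satisfied by some $\varphi \in \C^A$; a finite intersection of non-empty Zariski open subsets of the irreducible space $\C^A$ is then also non-empty and Zariski open. Smoothness of $\bar C$ (condition 1) is a standard toric Bertini-type statement: choosing a toric compactification of $\CC^2$ adapted to the Newton polygon of $A$, it amounts to Newton nondegeneracy of $\varphi$, which is open and non-empty by the Kouchnirenko-Bernstein-Khovanskii toolkit recalled in the Appendix. Condition (2), minimal order at the critical points of types (i), (ii) and (iii), is expressed by the non-vanishing of the discriminants of the univariate polynomials $c_n(t)$ and $c_N(t)$ for types (i) and (ii), together with the (toric Bertini) requirement that the system $\varphi = \partial\varphi/\partial x = 0$ has only simple solutions in $\CC^2$ for type (iii). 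Condition (3), distinctness of these critical values, is the non-vanishing of finitely many resultants in the coefficients. Non-emptiness of the combined conditions follows from the observation that the critical values of types (i), (ii), (iii) depend on essentially disjoint groups of coefficients --- those of $c_n$, of $c_N$, and of the interior $c_j$ with $n<j<N$ respectively --- so one can perturb the interior coefficients independently to separate the critical values of type (iii) from those of types (i) and (ii), while preserving (1) and (2).

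For part (2), the plan is to invoke the standard principle that the monodromy of a family of ramified coverings is locally constant on the base. By part (1), the $t$-nondegenerate locus $U \subset \C^A$ is a non-empty Zariski open subset of the irreducible affine space $\C^A$, hence path-connected in the Euclidean topology. Consider the incidence family $\mathcal{C} \to U \times \C^*$ whose fiber over $\varphi$ is the compactified covering $\bar C \to \C^*$; this is a flat family, and its total critical-value locus $\Delta \subset U \times \C^*$ is, by condition (2), a closed analytic subset whose projection to $U$ is proper with finite fibers. Ehresmann's fibration theorem applied to the restriction $(U \times \C^*) \setminus \Delta \to U$ turns the restricted family of unramified coverings into a locally trivial fibration, so the monodromy groups $G_\varphi \subset S_{N-n}$ are pairwise conjugate for all $\varphi \in U$.

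The main obstacle will be the non-emptiness argument in part (1): ensuring that the three types of critical values are simultaneously distinct while maintaining conditions (1) and (2) requires a careful perturbation argument. Its resolution hinges on the fact that the type (iii) critical values depend non-trivially on the interior coefficients --- a consequence of $A$ not being contained in an affine line and $d=1$ via the KBK toolkit --- which allows them to be separated from the type (i) and (ii) values by generic perturbations of the interior coefficients that do not disturb the earlier conditions.
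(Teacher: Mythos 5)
Your plan for part (1) has a genuine gap at its hardest point. The non-emptiness of condition (3) is made to rest on the claim that the critical values of types (i), (ii), (iii) depend on ``essentially disjoint groups of coefficients''; this is false for type (iii), since the solutions of $\varphi=\partial\varphi/\partial x=0$ depend on all coefficients, including those of $c_n$ and $c_N$. More seriously, even granting that perturbing interior coefficients moves the type (iii) values, your argument only separates critical values of \emph{different} types. It does not address the essential point that, generically, no two type (iii) critical points share a critical value (all of them move together under your perturbation), nor does it cleanly deliver that each type (iii) point has order exactly $2$ as opposed to merely being an isolated solution of $\varphi=\varphi_x=0$. These within-type statements are exactly where the work lies, and listing condition (3) as ``non-vanishing of finitely many resultants'' only gives Zariski-openness, not non-emptiness.

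The paper closes this gap by packaging all of part (1) into one transversality statement: $\varphi$ defines a curve $\Phi:\C^*\to\C^B$, $t\mapsto\varphi(\bullet,t)$, in the space of univariate polynomials supported at $B=\{j\mid A_j\ne\varnothing\}$, and Theorem \ref{thtransvc} makes $\Phi$ transversal, for generic $\varphi$, to the union of the discriminant hypersurface $D\subset\C^B$ and the relevant coordinate hyperplanes. Transversality to $D$ forces $\Phi$ to avoid $\sing(D)$, and a polynomial in the smooth part of $D$ has exactly one multiple root, of multiplicity $2$; this simultaneously gives order $2$ at the type (iii) points, the pairwise distinctness of their critical values (two coinciding values would put $\Phi(t)$ in the self-intersection locus of $D$), and, via avoidance of the pairwise intersections of the strata, the cross-type distinctness. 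You should replace the coefficient-perturbation argument by this, or by an equivalent codimension count on polynomials with two multiple roots. Your part (2) is essentially the paper's argument (local constancy plus connectedness of a non-empty Zariski open subset of $\C^A$), but note that $(U\times\C^*)\setminus\Delta\to U$ is not proper, so Ehresmann's theorem does not apply verbatim; the paper instead fixes generating loops for the complement of the bifurcation set and observes they persist, with the same monodromy permutations, under small perturbations of $\varphi$.
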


{\it Proof.} 
Genericity of condition (1) in the definition of nondegeneracy is well known. To verify genericity of conditions (2, 3), we can assume wlog that $d=1$ (letting $x^d$ to be the new variable $\tilde x$).
Under this assumption, the space $\C^B$ of all univariate polynomials supported at $B:=\{j\,|\,A_j\ne\varnothing\}$ contains: 

-- the discriminant hypersurface $D$, defined as the closure of the set of all polynomials having a multiple root in $\C^*$; 

-- the coordinate hyperplane $H_j$ defined as the set of all polynomials with zero coefficient at the monomial $x^j$.

The polynomial $\varphi$ defines the map $\Phi:\C^*\to\C^B$ sending $t$ to $\varphi(\bullet,t)\in\C^B$. For generic $\varphi$, this map is transversal to $D\cup H_n\cup H_{n+q}\cup H_{N-Q}\cup H_N$ (see Theorem 
6.6 and Corollary 6.7 in \cite{esv}). Transversality ensures that $\varphi$ is $t$-nondegenerate in the sense of the preceding definition:

-- transversality to $D$ implies that the curve $C$ is smooth: indeed, $D$ is the discriminant of the projection $p$ of the smooth set $S=\{(f,x)\,|\,f(x)=0\}\subset\C^B\times\C^*$ to $\C^B$, so the transversality of $\Phi$ to $D$ implies its transversality to the map $\pi$, thus the fiber product $p\times_{\C^B}\Phi\simeq C$ is smooth;

-- transversality to $D$ also implies that $\Phi$ avoids the singular locus of $D$. Therefore, the type (iii) critical points have order 2 and pairwise distinct critical values, because a polynomial in the smooth part of $D$ has one multiple root, and this root has multiplicity 2 (see e.g. \cite{gkz});

-- transversality to $H_n$ and $H_N$ implies that the closure $\bar C$ is smooth outside $C$;

-- avoiding $H_n\cap H_{n+q}$ and $H_{N-Q}\cap H_Q$ implies that type (i) and (ii) critical points have orders $q$ and $Q$ respectively;

-- avoiding the pairwise intersections of $H_n$, $H_N$ and $D$ implies that critical points of different types cannot have equal critical values.

This proves Part 1 of the lemma. Part 2 is the following evident fact applied to the family of curves $C_\varphi:=($the closure of $\{\varphi(x,t)=0\}$ in the $A$-toric surface$)$ and their coordinate projections $q(x,t):=t$, as $\varphi$ runs over $U:=\{t$-nondegenerate polynomials$\}$. \hfill$\square$
\begin{utver}
Consider proper regular maps of smooth varieties 

$$U\xleftarrow{p} C\xrightarrow{q}\CP^1,$$

such that 

1) the map $p$ is a locally trivial fibration with fiber $C_\varphi:=p^{-1}(\varphi)$ a smooth compact curve;

2) the projection $U\times\CP^1\to U$, restricted to the set $\{(\varphi,t)\,|\,t$ is a ramification point of $q:C_\varphi\to\CP^1\}$, is an (unramified) covering.

Then the topology of the ramified covering $q:C_\varphi\to\CP^1$ (and in particular its monodromy group) is locally constant as a function of $\varphi\in U$.
\end{utver}

{\it Proof of Lemma \ref{ltranspos}.} We shall prove the lemma for an arbitrary $t$-nondegenerate $\varphi$. This is enough, because generic polynomials are $t$-nondegenerate by Lemma \ref{galoisgen}.1.

Denote the number of branching points of types (i), (ii) and (iii) of the ramified covering $\bar C\to\C^*$ by $\sharp_1,\sharp_2$ and $\sharp_3$ respectively. Since the points of types $(i)$ and $(ii)$ correspond to the roots of the polynomials $c_n\in\C^{A_n}$ and $c_N\in\C^{A_N}$ respectively, we have 
$$\sharp_1=\Vol_\Z (A_n) \; \mbox{ and }\; \sharp_2=\Vol_\Z (A_N),$$
where $\Vol_\Z$ is the integer length of the convex hull of a set in $\Z^1$.
By the Riemann-Hurwitz formula, the quantity
$$-(q-1)\cdot \sharp_1-(Q-1)\cdot\sharp_2-\sharp_3$$ equals the the Euler characteristics of the curve $\bar C$, which in turn equals
$$-\Vol_\Z (A)+\Vol_\Z (A_N)+\Vol_\Z (A_n)$$
by the BKK formula applied to the curve $C=\{\varphi=0\}$. Here $\Vol_\Z (A)$ is the lattice area of the convex hull of $A$ (defined as the Euclidean area times $2!$).

As a result, we have $$\sharp_3=\Vol_\Z (A)-Q\Vol_\Z (A_N)-q\Vol_\Z (A_n),$$
which is strictly positive for $d=1$ and $N-n>1$, unless $A$ is contained in an affine line. Thus we have at least one type (iii) ramification point, and the local monodromy at this point is a transposition.
\hfill$\square$

\begin{rem}\label{remnondeggalois}
\textbf{1.} This concludes the proof of Theorem \ref{thm:galois} for generic $\varphi$, without explicit genericity conditions. But actually we can take $\varphi$ to be $t$-nondegenerate as our genericity assumption in Theorem \ref{thm:galois}. Indeed, since all $t$-nondegenerate polynomials $\varphi$ have the same Galois group by Lemma \ref{galoisgen}.2, and almost all of them have the full symmetric Galois group by Theorem \ref{thm:galois}, so do all of them. Thus Theorem \ref{thm:galois} is valid for all $t$-nondegenerate $\varphi$.

\textbf{2.} If we regard $\varphi$ as a polynomial of $x,t$ and one more dummy variable $\tilde x$, then its $I$-nondegeneracy with respect to the involution $I(x,\tilde x,t)=(\tilde x,x,t)$ (Definition \ref{defInondeg}) is the same thing as its $t$-nondegeneracy in the sense of Definition \ref{defgaloisgen}. This clarifies the geometric meaning of the involved Definition \ref{defInondeg} and deepens the relation between the Galois theory of $\varphi$ and the symmetric curve $\varphi(x,t)=\varphi(\tilde x,t)=0$.

\textbf{3.} Of course $\varphi$ does not have to be $t$-nondegenerate in order to have the full symmetric Galois group. Similarly, the symmetric curve does not have to be $I$-nondegenerate in order to have the minimal possible number of irreducible components. It would be interesting to find significantly weaker but still controllable versions of $t$-nondegeneracy and $I$-nondegeneracy. For $t$-nondegeneracy, the problem is addressed in \cite[Section 2.6]{el}.
\end{rem}

\section{Geometry of symmetric curves: rules and exceptions}\label{sre}

If $f$ and $g$ are generic polynomials with prescribed support sets $A$ and $B\subset M\simeq\Z^3$, then the geometry of the curve $f=g=0$ in the torus $T\simeq\CC^3$ is described by the Bernstein-Kouchnirenko-Khovanskii toolkit (see e.g. Section 2 in \cite{crit} for a review).

We now start to study what happens to this theory, if the curve is symmetric with respect to an involution $I:T\to T$ given by the formula $(u,v,w)\mapsto (v,u,w)$ in a coordinate system $(u,v,w)$. More specifically, from now on we study the curve $C\subset T$ given by the equations $f(x)=0$ and $g(x):=f(I(x))=0$ for generic $f$ supported at a given set $A\subset M\simeq\Z^3,\, 1<|A|<\infty$. Recall that the second equation $g$ is supported at the symmetric set $B:=I(A)$, where the symmetry $I:M\to M$ induced by $I:T\to T$ is denoted by the same letter to a small abuse of notation. 

\subsection{Dimension} First of all, the symmetric ``curve'' $f=g=0$ may have the wrong dimension even for a generic polynomial $f\in\C^A$. We start with examples of such sets $A\subset M$.

The fixed point planes $T^I\subset T$  and $M^I\subset M$ of the involution $I$ are defined by the equations $m(x)=1$ for some monomial $m\in M$ 
and $t=0$ for some surjective linear function $t:M\to\Z$. 

We can and will choose $m$ and $t$ so that $t(m)=2$ (rather than $-2$). More specifically, choosing respective coordinate systems $(u,v,w):T\to\CC^3$ and $(\ru,\rv,\rw):M\to\Z^3$ so that $I(u,v,w)=(v,u,w)$ and $I(\ru,\rv,\rw)=(\rv,\ru,\rw)$, we can set $m(u,v,w)=uv^{-1}$ and $t(\ru,\rv,\rw)=\ru-\rv$.

The monomial $m$ generates a 1-dimensional sublattice $M^{-I}$ (which is the fixed point line $\ru+\rv=\rw=0$ of the involution $-I:M\to M$), and $t\in M^*$ generates the one-parametric subgroup $\exp(\C\cdot t)=\{uv=w=1\}\subset T$ that will be denoted by $T^{-I}$.

\begin{defin}
The planes $M^I$ and $T^I$ will be called the {\it diagonal} planes of the lattice and the torus respectively, and the lines $M^{-I}$ and $T^{-I}$ will be called the {\it antidiagonal} lines.
\end{defin}

\begin{exa}\label{exa2dim}
{\bf (Type D)} Let $A$ belong to a plane parallel to $M^I$, then $f/g$ is a monomial, and the set $f=g=0$ is actually the surface $f=0$. The geometry of this surface for generic $f\in\C^A$ is described by the BKK toolkit.

{\bf (Type E)} Let $A$ belong to a line parallel to $M^{-I}$, then $I(A)$ 
is contained in the same line, and the set $f=g=0$ is empty for generic $f\in\C^A$.
\end{exa}
\begin{theor}\label{thdim}
Unless $A$ is of type D or E, 
the set $f=g=0$ is a non-empty curve for generic $f\in\C^A$.
\end{theor}

See Section \ref{ssproofs1} for the proof. In what follows, we never consider such exceptional support sets $A$.

\vspace{1ex}

\subsection{Connected components}
Even if $f=g=0$ is a curve, it may not be connected. To give such examples, recall that a connected subgroup $S$ of the torus $T$ is called a subtorus, and its coset $g\cdot S=\{gs\,|\, s\in S\}$ for any $g\in T$ will be called a shifted subtorus. 

A codimension one subtorus can be given by an equation $\mu(x)=1$ for a primitive monomial $\mu\in M$ (recall that an element $\mu$ of a lattice $M$ is said to be primitive if it is not divisible by an integer number greater than 1, or, equivalently, the quotient group $M/\mu$ is a lattice).

The character lattice of the subtorus $\mu(x)=1$ defined by a primitive monomial $\mu\in M$ is naturally isomorphic to the quotient lattice $M/\mu$. The support of a generic polynomial $f\in\C^A$ restricted to this subtorus equals the image of $A$ under the projection $M\to M/\mu$, which we denote by $A/\mu$.

Dually, a 1-dimensional subtorus can be represented as $\exp(\C\cdot s)$, where $\exp(s_1,s_2,s_3)=(e^{s_1},e^{s_2},e^{s_3})$ is the exponential map from the Lie algebra of $T$, and $s=(s_1,s_2,s_3)$ is an element of this Lie algebra (or, equivalently, a linear function $s_1\ru+s_2\rv+s_3\rw$ on $M$).

\begin{exa}\label{exadisconn} (See Figure \ref{fig:exceptional} below.) Assume that $A$ is not of type D or E.

{\bf (Type C1)} Let $A$ belong to a plane parallel to $M^{-I}$, then $B=I(A)$ is contained in the same plane. The containing plane can be defined by the equation $s=s_0$ for a surjective linear function $s:M\to\Z$ and $s_0\in\Z$. Then the curve $f=g=0$ is the (disjoint) union of several shifted copies of the one-dimensional subtorus $\exp(\C\cdot s)$. The number of these copies for generic $f\in\C^A$ equals the lattice mixed area of $A$ and $B$. 

{\bf (Type C2)} Let $A$ belong to the union of the plane $M^I$ and a line parallel to $M^{-I}$, then the Newton polytope of $f-g$ is a segment of lattice length $q>0$ in $M^{-I}$. In coordinates, we have $f-g=\prod_{i=1}^q(u-\alpha_i v)$ up to a monomial multiplier for pairwise different $\alpha_i\in\C$, thus $f-g=0$ is the (disjoint) union of $q$ shifted copies of the subtorus $T^I$. In each of these copies, the equation $f=0$ defines a plane curve supported at $A/m$. The geometry of this generic plane curve with a given support is described by the BKK toolkit. 
\end{exa}
\begin{rem}
In coordinates, if $I(\ru,\rv,\rw)=(\rv,\ru,\rw)$, then
examples of a line parallel to $M^{-I}$ include $\ru+\rv=\rw=0$ and $\ru+\rv=\rw=1$. Note that they cannot be sent to each other by an affine automorphism of the lattice commuting with the involution $I$, but any other integer line parallel to $M^{-I}$ can be sent to one of these two with a parallel translation of the lattice commuting with the involution $I$. Indeed, the intersection of such a line with $M^I$ has the form either $\mu$ or $\mu+(1/2,1/2,1)$ for some $\mu\in M\simeq\Z^3$, so the parallel translation by $-\mu$ will do.

Similarly, examples of a plane containing the line $M^{-I}$ include $\rw=0$, $\ru+\rv=0$ and $\ru+\rv=1$. They cannot be sent to each other with an affine automorphism of the lattice commuting with the involution $I$, but any other plane containing $M^{-I}$ can be sent to one of these three. 

\end{rem}

\begin{theor}\label{thconn}
Unless $A$ suits one of the preceding examples (D, E, C1 or C2) up to a shift, the set $f=g=0$ is a connected curve for generic $f\in\C^A$.
\end{theor}
See Section \ref{ssproofs1} for the proof.

\begin{rem}
{\bf 1.} Note that the curves from the two parts of Example \ref{exadisconn} are connected as well, if $q$ or the mixed area of $A$ and $B$ respectively equals 1. One can easily classify all sets $A$ satisfying these conditions.

{\bf 2.} It would be interesting to classify all 3-dimensional Newton polytopes of symmetric non-connected curves, i.e. all lattice polytopes in $\R^3$, whose intersection with $\Z^3$ is contained in $M^I\cup M^{-I}$ and does not suit part 1 of this remark.
This problem of lattice geometry resembles (and contains) the problem of classifying empty pyramids, well known e.g. from the theory of multidimensional continued fractions \cite{karp}.
\end{rem}

\subsection{Diagonal components and singularities} Even if the curve $f=g=0$ is connected, it is rarely irreducible, because it usually has {\it diagonal components}.

\begin{observ}\label{ex:diagcomponents&singularities} Assume that $A$ is not of type D, E, or C1.

1) The set $\{f=g=0\}\cap T^I$ is an algebraic curve contained in $\{f=g=0\}$. 
Moreover, for generic $f\in\C^A$, it is an irreducible component of the curve $\{f=g=0\}$.

Indeed, it is a generic plane curve supported at $A/m$. The geometry of such curves is describe by the classical BKK toolkit: in particular, they are irreducible unless $A/m$ is contained in a line, i.e. $A$ is contained in a plane parallel to $M^{-I}$, which is one of the exceptional types of sets $A$ that we have excluded from our consideration.

2) More generally, there are extra irreducible components similar to the above one. To see this, define the integer $d$ in the following equivalent ways:

-- as the index of the lattice generated by $A+I(A)+M^I$ in $M$;

-- as the largest $d$ such that the integers $t(a)$, $a\in A$, are pairwise equal modulo $d$;

-- as the largest $d$ such that $\mu f-(\mu f)\circ I$ is divisible by $u^d-v^d$ for some monomial $\mu$.

Recall that $m\in M$ is the primitive monomial defining the diagonal plane $T^I=\{m=1\}$, and consider the (disjoint) union of $d$ shifted copies of the diagonal plane defined by the equation $m^d=1$. Then the intersection of the curve $\{f=g=0\}$ with each of these shifted copies is a shifted copy of the curve $\{f=g=0\}\cap T^I$. In particular, it is an irreducible component of $\{f=g=0\}$ for generic $f\in\C^A$.
\end{observ}

\begin{defin}\label{def:diag&properparts} Assume that $A$ is not of type D so that $\{f=g=0\}$ is a (possibly empty) curve, see Theorem \ref{thdim}.

1) The number $d=:d(A)$ is called the denominator of the set $A$.

2) The union of the irreducible components $\{f=g=0\}\cap \{m^d=1\} $ of the curve $\{f=g=0\}$ from the preceding Observation will be referred to as the {\it diagonal part} of the symmetric curve.

3) The union of the other irreducible components (i.e. the closure of $\{f=g=0\}\setminus\{m^d=1\}$) will be called the {\it proper part} of the symmetric curve.
\end{defin}

\begin{rem}
Note that, for the support set $A$ of type C1 or C2, every irreducible component of the curve $f=g=0$ is contained in a certain shifted copy of the diagonal subtorus $\{m=c\},\, c\in\C^*$. However, we do not consider all of them diagonal components: for generic $f\in\C^A$, the diagonal ones are exactly those for which $c$ is a root of unity. 

However, this occurs only for types C1 and C2: for any other symmetric curve, non-diagonal irreducible components never belong to a shifted copy of the diagonal subtorus (see Theorem \ref{thintroirr} below).
\end{rem}

For some support sets $A$, the proper part and the diagonal part do not intersect. Besides the types $D$, $E$, $C1$ and $C2$, there is one more such exceptional type.
\begin{exa}{\bf (Type I10)} Let $A$ be not of types $D$, $E$, $C1$ and $C2$, but contained in the union of two lines parallel to each other and to $M^I$. Then the proper part of the curve $\{f=g=0\}$ is empty for generic $f\in\C^A$ (and thus does not intersect the diagonal part). Indeed, up to a monomial multiplier, we can take $f=p+\mu q$ where $p$ and $q$ are univariate polynomials in some monomial $u^av^aw^b$ and $\mu=u^\alpha v^\beta w^\gamma$ is some monomial. Thus the system $f=g=0$ is equivalent to $f=f-g=0$ which in turn is equivalent to $p-\mu q=(\mu-\mu\circ I)\cdot q=0$. If $q=0$, then so is $p$ but the set $\{q=p=0\}$ is generically empty. The other alternative $\mu-\mu\circ I=0$ leads to solutions in $\{m^{\alpha-\beta}=1\}$, that is in the diagonal part.
\end{exa}

\begin{theor}\label{thsing} Consider the symmetric curve $f=g=0$, defined by a generic polynomial $f\in\C^A$, whose support set $A$ is not of type D, E, C1, C2 or I10.

1) The proper part and the diagonal parts of the curve are not empty.

2) The intersection of each of the proper components with each of the diagonal components is non-empty, and has the same cardinality. In particular, the symmetric curve is connected.

3) The proper and diagonal parts of the curve are smooth curves, intersecting transversally (i.e. without tangency) at finitely many points. 
\end{theor}
See Section \ref{ssproofs1} for the proof. The exact number of the intersection points for the proper and the diagonal parts is computed in the subsequent Theorem \ref{thsingnum}.

\subsection{Irreducibility} Even the proper part of the curve $f=g=0$  may be reducible.

\begin{exa}\label{exaI} (See Figure \ref{fig:exceptional}.) Assume that $A$ is not of type D, E, C1 or C2.

{\bf (Type I1)} Let $A$ belong to the union of two planes parallel to $M^I$ (this includes supports sets of type I10). Denoting the intersections of these planes with $A$ by $A_1$ and $A_2$, the polynomial $f$ decomposes into the sum $f_1+f_2,\, f_i\in\C^{A_i}$.

The proper part of the curve $f=g=0$ is given by the equations $f_1=f_2=0$ since, up to a shift, we can write $f_1=\tilde f_1(uv,w)$ and $f_2=\mu\cdot \tilde f_2(uv,w)$ for some monomial $\mu$. Thus the proper part splits into several shifted copies of the one-dimensional subtorus $T^{-I}$. The number of copies equals the lattice mixed area of $A_1$ and $A_2$ (which makes sense, because both $A_1$ and $A_2$ up to a shift belong to the same two-dimensional lattice $M^I$).

{\bf (Type I2)} Let, up to a shift, $A$ belong to the union of the plane $M^I$, some line $L$ parallel to it, and the symmetric line $I(L)$. Denoting the intersection of the plane with $A$ by $A_0$ and the intersection of the two lines with $A$ by $A_\pm$ respectively, the polynomial $f$ decomposes into the sum $f_-+f_++f_0$, with $f_\bullet\in\C^{A_\bullet}$. 

One can easily see that the proper part of the curve $f=g=0$ is given by the equations $f_+(x)-f_-(I(x))=f(x)=0$, supported at $A_+\cup I(A_-)\subset L$ and $A_0$ respectively. 
The convex hull of the first support set is a segment of the form $\{a',a'+m',a'+2m',\ldots,a'+d'm'\}$ for some integer $d'\geq 1$, so, for generic $f\in\C^A$,  the first equation $f_+(x)=f_-(I(x))$ defines $d'$ shifted copies $T_j$ of the two-dimensional subtorus $m'=1$.
In each of these shifted subtori, the second equation $f=0$ defines a generic  plane curve supported at $A/m'$. The geometry of such curves is governed by the BKK toolkit: in particular, they are smooth and irreducible, because $A/m'$ is not contained in a line. The proper part of the curve $f=g=0$ is the (disjoint) union of these curves.
\end{exa}

\begin{figure}[b]
    \centering
    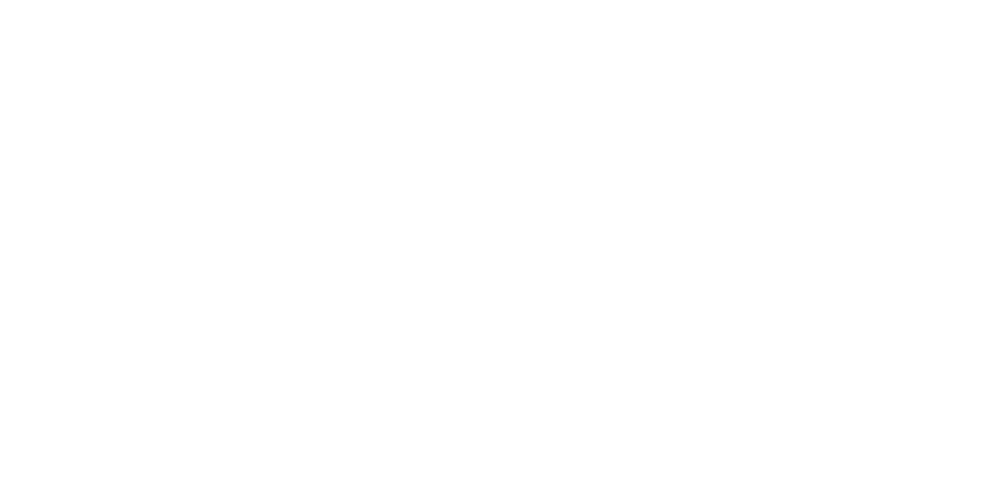
    \caption{The exceptional types C1, C2, I1 and I2. In the first row, we picture the support $A$ in the lattice of monomials. In the second row, we picture the corresponding symmetric curve, with the proper part in blue and the symmetric part in red.}
    \label{fig:exceptional}
\end{figure}

\begin{defin}
If $A$ can be shifted to one of the types D, E, C1, C2, I1 or I2, then it is called {\it exceptional},and so is the symmetric curve $f=g=0$ defined by a generic polynomial $f\in\C^A$ supported at such $A$.
\end{defin}

\begin{theor}\label{thintroirr}
Unless $A$ is exceptional, the proper part of the curve $f=g=0$ for generic $f\in\C^A$ is (1) irreducible, and (2) does not belong to a proper subtorus of $T$. 

\end{theor}
See Section \ref{ssproofs1} for the proof of part (2) and Section \ref{sirredproof} for part (1). An explicit sufficient genericity condition for this theorem is given in Definition \ref{defInondeg} below. 

\begin{rem}
1. In what follows, we never consider  exceptional support sets and curves, unless we explicitly specify the contrary.

2. The notation D, E, C1, C2, I1 and I2 comes from ``the wrong Dimension'', ``Empty'' and ``several Connected or Irreducible components contained in shifted 1 or 2-dimensional subtori'', respectively.

\end{rem}

\section{Enumerative results and genericity conditions for symmetric curves}

{\bf Geometry of the support set.} For what follows, we need some vocabulary associated to the support set $A\subset M\simeq\Z^3$.

Warning: when we say about such finite set $A$ something that makes little sense for finite sets (like having a certain dimension or a certain volume or being parallel to a certain plane), we always imply to say this about the convex hull of $A$.
In particular, a subset $B$ of a finite set $A\subset M\simeq\Z^n$ is called its face if it is the intersection of $A$ with a face of its convex hull. 

We say that the faces $B_i\subset A_i$ for a collection of finite sets $A_i\subset M$ are {\\it compatible}, if there exists a linear function $\gamma:M\to\Z$, whose restriction to $A_i$ attains its maximal value exactly at the points of the face $B_i$ for every $i$ (or, equivalently, if $\sum_i B_i$ is a face of $\sum_i A_i$).

\begin{defin}\label{defblinder}
1. For $b\in\Z$, denote $\bar A=A+IA$ and $\check A:=\bar A\setminus\{t=0\}$.

2. An edge $E\subset A$ is called a {\it blinder} if $\bar E$ is an edge of $\bar A$. 

3. The blinder $E\subset A$ is compatible with a unique {\it symmetric face} $\check E\subset\check A$ such that $\check E=I\check E$.

4. Choose the linear surjection $\gamma:M\to\Z$ such that 
$\gamma(\check E)=\max\gamma(\check A)$. 

The {\it multiplicity} $h_E\geq 1$ of the blinder $E$ is the difference 
$\max\gamma\bar A-\max\gamma\check A$.

5. The {\it projection along} $E$ is an affine surjection $p_E:\Z^3\to\Z^2$ sending $\bar E$ to 0. 

6. The {\it link polygon} $P_E$ is the nonconvex lattice polygon $(\R_+\cdot p_E\bar A)\setminus(\R_+\cdot p_E\check A)$.
\end{defin}
\begin{rem}\label{remsuppgeom}
1. The symmetric face and multiplicity are defined unless $A$ is of type D (in the latter case it is convenient to define $\check E:=\varnothing$ and thus $h_E=+\infty$).

2. A blinder $E$ is contained in the plane  $\{t=b\}$ for some $b\in\Z$. Its every compatible face $F\subset \check A$ is the Minkowski sum of $E$ and the face $F-E$ of the set $A_b\cup (I(A_b)-b\cdot m)$, for $A_b:=A\setminus\{t=b\}$. 

\end{rem}

\begin{defin}\label{deftrop}
1) The tropical fan $[A]$ consists of the exterior normal cones to the edges of $A$ with weights equal to the lattice length of the edge.

2) For $A$ and $B\subset\Z^3$, the intersection product $[A]\cdot[B]$ consists of exterior normal rays to the facets $P+Q\subset A+B$ with weights equal to $\MV(P,Q)$ (where $P\subset A$ and $Q\subset B$ are the corresponding compatible faces, and their mixed volume is taken in the affine span of $P+Q$).
\end{defin}
\begin{exa}
If $A$ is a segment of lattice length $d$, then $[A]$ is its normal hyperplane $H$ with weight $d$, and $[A]\cdot[B]$ equals $d\cdot [$projection of $B$ along $A]$ in $H$. 
\end{exa}

{\bf Genericity conditions} are obtained in Theorem 5.6, \cite{crit}, for a symmetric complete intersection $f(x_1,x_2,x_3,\ldots,x_n)=f(x_2,x_1,x_3,\ldots,x_n)=0$ for any $n$. We cite them here, because they significantly simplify for $n=3$, and this simplified version is used to prove classification results of the preceding sections. Let $0\in A\subset M\simeq \Z^3$.

\begin{defin}\label{defInondeg}
The polynomial $f\in\C^A$ and the curve $f=f\circ I=0$ are called {\it $I$-nondegenerate} (to be distinguished from nondegeneracy, see e.g. \cite{crit}, Definition 1.1), if:

1) for compatible non-blinder faces $P\subset A$ and $Q\subset IA$, the system of equations $f|_P(u,v,w)=f|_Q(v,u,w)=0$ is regular (i.e. all of its roots are regular);

2) for a blinder $E\subset A$ and a compatible face $F\subset\check A$, the system of equations $f|_E(u,v,w)=[f(u,v,w)-(u/v)^bf(v,u,w)]|_{F-E}=0$ is regular (defining $d$ as in Remark \ref{remsuppgeom}.2);

3) the system of equations $f(u,v,w)=\frac{f(u,v,w)-f(v,u,w)}{u^d-v^d}=u^d-v^d=0$ is regular in $\CC^3$ (defining $d$ as in Observation \ref{ex:diagcomponents&singularities}).

\end{defin}
\begin{rem} 
The expression in the square bracket is supported at the last set in Remark \ref{remsuppgeom}.2. The following is the summary of Sections 5.3 and 3.3 in \cite{crit} for $n=3$.

\end{rem}


\begin{theor}\label{thsingnum}
1. All polynomials $f\in\C^A\setminus($an algebraic hypersurace$)$ are $I$-nondegenerate, for all of them the proper part of the symmetric curve $\{f(u,v,w)=f(v,u,w)=0\}$ is smooth and has the same topological type.

2. It intersects a generic plane of the form $\{u=cv\}$, including each of its $d$ diagonal components, 
at $\sharp A$ many points, where $\sharp A$ is defined as
$$MV(\check A/m,A/m)-\Vol_\Z (A/m)=\Vol_\Z (A/m)-\sum_{\mbox{ \scriptsize \rm blinder } E\subset A} h_E\cdot \Vol_\Z E=$$ $$=\Vol_\Z (A/m)-\sum_{b\in\Z}\bigl(\Vol_\Z (A/m)-\MV(A/m,A_b/m)\bigr).\eqno{(*)}$$

3. Its Euler characteristics equals $d\cdot\sharp A$ minus $$\MV(\check A,\check A,A)+\MV(\check A,A,A)-d\cdot\MV(\check A/m,A/m)=\Vol_\Z\bar A-\sum_{\mbox{ \scriptsize \rm blinder }E\subset A}\Vol_\Z (E)\cdot\Vol_\Z P_E.$$ 

4. It is irreducible at least if $A+\{0,d\cdot m\}$ can be shifted to the interior of $\check A$.

5. Its tropical fan 
equals $[\check A]\cdot [A]-[A]\cdot [A]-[A]\cdot D$, where $D$ is the diagonal plane with weight $d$ (this holds even if we drop assumption (3) in the definition of $I$-nondegeneracy).

6. The closure of the symmetric curve in the $\check A$-toric variety intersects its orbits transversally (in particular, it is smooth at every point of intersection with a 2-dimensional orbit, and avoids smaller orbits).
\end{theor}
\begin{rem}
1. In particular, this gives the genus of the proper part: for any sch\"on curve $C\subset\CC^n$, we have $2-2g(C)=e(C)+($the total multiplicity of the rays of $\Trop C)$. The right hand side terms in this equality are known from (3) and (5) above.

2. Part (1) implies that whatever we prove about the number of components or other topological invariants of symmetric curves for generic $f\in\C^A$ (i.e. for all $f$ in a non-empty Zariski open $U\subset\C^A$), this holds in particular  for all $I$-nondegenerate $f$. This is because the Zariski open sets $U$ and $V:=\{I$-nondegenerate polynomials$\}$ overlap, and any topological invariant of the symmetric curve is constant as a function on $V$ according to part (1).
\end{rem}

\begin{exa}\label{exaquadric}

To illustrate the above theorem and subsequent results, it may be helpful to keep track of the example of a generic quadratic form $f\in\C^A,\,A=\{\ru\geqslant 0,\rv\geqslant 0,\rw\geqslant 0,\ru+\rv+\rw\leqslant 2\}$. In this case, $A$ has a blinder edge $E$ at the $\rw$-coordinate axis, and the symmetric curve $f=g=0$ is a union of two plane conics: one is the diagonal part in the plane $u=v$, and the other one (in a perpendicular plane) is the proper part. 
The two conics intersect each other in $$\Vol_\Z (A/m)-h_E\cdot \Vol_\Z(E)=4-1\cdot 2=\MV(A/m,A_0/m)=2$$ points, because $A_b/m=A/m$ unless $b=0$, so the terms for $b\ne 0$ vanish in Theorem \ref{thsingnum}.2.
\end{exa}

\section{Connectivity and singularities of symmetric curves: the proofs}\label{sconn}

In this section and the next one,  we prove theorems of Section \ref{sre}.

\subsection{Singularities do exist} 
\begin{lemma}\label{lposit}
The number of intersection points $\sharp A$ in the setting of Theorem \ref{thsingnum} is strictly positive unless $A$ is of type D, E, C1, C2 or I10.
\end{lemma}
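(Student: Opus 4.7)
The plan is to analyze the defining combinatorial formula
$$\sharp A \;=\; \Vol_\Z(A/m)\;-\;\sum_{\text{blinder }E\subset A} h_E\cdot \Vol_\Z(E)$$
and show that $\sharp A\le 0$ forces $A$ into one of the five excluded types. First I would dispose of the degenerate case $\Vol_\Z(A/m)=0$: then the projection $\bar A:=A/m$ lies in an affine line of the lattice $M/m$, which is equivalent to $A$ being contained in a plane of $M$ parallel to the antidiagonal line $M^{-I}$. Together with the standing hypothesis that $A$ is not of type D (which is precisely what makes every $h_E$ finite), this identifies $A$ as of type C1 if $\dim A=2$ and of type E if $A$ is in a line.

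Assuming $\Vol_\Z(A/m)>0$, I would pass to the planar polygon $\bar A\subset M/m$. For every blinder $E$, the defining functional $\gamma_0\in M^*$ (primitive, vanishing on $M^{-I}$, with maximum on $A$ attained exactly on $E$) descends to $\bar\gamma_0\in (M/m)^*$ whose maximum on $\bar A$ is attained exactly on $\bar E:=\pi(E)$, so $\bar E$ is an edge of $\bar A$, and $h_E$ records the $\bar\gamma_0$-height of $\bar A$ above the part of $\bar A$ coming from lifts in $A$ outside the strip $E+M^I$. The key step is the polygonal inequality $\sum_{E} h_E\,\Vol_\Z(E)\le\Vol_\Z(\bar A)$, which I would prove by attaching to each blinder $E$ a trapezoidal cap $Q_E\subset\bar A$ with base $\bar E$ and lattice height $h_E$ in the $\bar\gamma_0$-direction; a direct computation, tracking the index-$2$ discrepancy between the sublattice $\pi(M^I)$ and $M/m$ (which shifts $\Vol_\Z(\bar E)$ by a factor of $1$ or $2$ relative to $\Vol_\Z(E)$), shows that the lattice area of $Q_E$ equals $h_E\,\Vol_\Z(E)$, and caps of distinct blinders lie in disjoint corner regions of $\bar A$ because their outer normals point in distinct outward directions.

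The equality case $\sharp A=0$ then forces these caps to tile $\bar A$ exactly. A short classification of convex lattice polygons tiled by pairwise disjoint edge-attached caps leaves only two possibilities: either $\bar A$ is a triangle whose unique non-cap vertex is the image of a line of $A$ parallel to $M^{-I}$, making $A$ of type C2; or $\bar A$ is a (possibly degenerate) parallelogram whose two opposite edges are both blinders while the remaining vertices collapse onto those edges, making $A$ of type I10. The step I expect to be the main obstacle is this polygonal inequality together with its equality analysis: tracking the index-$2$ factor between $\pi(M^I)$ and $M/m$ requires careful bookkeeping, and the disjointness of the caps $Q_E$ needs to be verified by a convexity argument adapted to lattice polygons, with the equality characterization then reduced to an elementary but case-sensitive statement about tilings of a convex polygon by edge-anchored trapezoids.
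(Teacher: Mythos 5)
Your overall strategy --- bound the subtrahend $\sum_E h_E\cdot\Vol_\Z(E)$ by the areas of pairwise disjoint regions of the projected polygon $\conv(A/m)$ and then analyse the equality case --- is the same as the paper's, and the degenerate case $\Vol_\Z(A/m)=0$ and the area estimate for a single cap are handled correctly in spirit (though the area of a cap with base the full edge $\bar E$ is only \emph{at least} $h_E\Vol_\Z(E)$, not equal to it, when the index-$2$ discrepancy doubles the length of $\bar E$; this is harmless for the inequality). The genuine gap is the disjointness step. You claim the caps of distinct blinders are disjoint ``because their outer normals point in distinct outward directions,'' and this is false. The depth $h_E=\gamma(E)-\max\gamma\bigl(A\setminus(E+M^I)\bigr)$ is measured only against the points of $A$ \emph{outside} the plane $E+M^I$. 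Two adjacent blinder edges $E_1,E_2$ of $A$ sharing a vertex automatically lie in a common plane $L$ parallel to $M^I$, so neither edge constrains the other's depth, and both $h_{E_1}$ and $h_{E_2}$ can be as large as the $\gamma$-distance from $L$ to the rest of $A$. The two caps then both contain a large neighbourhood of the common vertex of $\bar E_1$ and $\bar E_2$ and overlap with positive area; consequently $\sum_E h_E\Vol_\Z(E)\le\Vol_\Z(A/m)$ does not follow from your construction, and the tiling classification built on it has no basis.

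The paper's proof repairs exactly this point. It groups the blinder edges into the connected components $B_1,\dots,B_q$ of the union of blinder edges of $\conv(A/m)$; the preimage in $A$ of each chain $B_i$ lies in a single plane $L_i$ parallel to $M^I$, and one chooses a single auxiliary boundary lattice point $V_i$ (or, when $q=1$, a point of the projection of $A\setminus L_1$) whose fiber in $A$ escapes $L_i$. The triangles with bases the blinder edges of $B_i$ and common apex $V_i$ have pairwise disjoint interiors for elementary convexity reasons, and each has lattice area at least $h_E\Vol_\Z(E)$ precisely because the apex's fiber witnesses the maximum in the definition of $h_E$. The case analysis on $q$ ($q>2$, $q=1$, $q=2$) is then what shows the triangles never exhaust $\conv(A/m)$ outside the C2 and I10 configurations. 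If you insist on per-blinder caps, you must separately prove disjointness for blinders within one chain --- and that is where the actual content of the lemma sits.
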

{\it Proof.} We recall that the main ingredient in the quantity $\sharp A$ is $A\mapsto A/m=:B$, the image of $A$ under the projection $M\to M/M^{-I}$. The image of a blinder edge $E\subset A$ under this projection will be called a blinder edge of $\conv (B)$.

In order to prove that $\sharp A$ is positive, we shall construct non-overlapping pieces of $\conv (B)$, whose area estimates from above the subtrahend in $(*)$. 

In order to construct such pieces, denote by $B_1,\ldots,B_q$ the connected components of the union of the blinder edges of $\conv(B)$, and consider three cases: $q>2$, $q=2$ and $q=1$ (note that for $q=0$ the sought number $\sharp A$ is positive by definition under our assumptions on $A$). 

{\bf a)} $q>2$. Let $V_i\in B\setminus B_i$ be the  boundary point of $B$ closest counterclockwise to the polygonal chain $B_i$ (see Figure \ref{fig:proof4}).

Note that the preimage of $B_i\cap B$ under the projection $A\to B$ is contained in a plane parallel to $M^I$, while the preimage of $V_i$ is not entirely contained in this plane. Thus $h_E\cdot\Vol_\Z (E)$ for a blinder $E\subset B_i$ is at most the lattice area of the triangle with base $E$ and vertex $V_i$.

Then $\sharp A$ is at least the lattice area of $B$ minus the lattice areas of the triangles $\conv(B_i\cup\{V_i\})$. The interior of latter triangles do not overlap and do not cover the interior of $\conv (B)$. Thus the number of singularities is the lattice area of a lattice polygon with non-empty interior. It is therefore strictly positive.

{\bf b)} If $q=1$, then the preimage of $B_1$ under the projection $A\to B$ is contained in one plane $L$ parallel to $M^I$, and the image of $A\setminus L$ under this projection will be denoted by $B'\subset B$  (see Figure \ref{fig:proof4}). For every blinder $E\subset B_1$, we can choose a triangle with the base $E$, one more vertex in $B'$, no other points of $B'$ in it, and the area at least $h_E\cdot \Vol_\Z (E)$, so that these triangles do not overlap. These triangles do not cover $\conv (B)$, because $B'$ contains more than one point (otherwise $A$ would have exceptional type $C2$). Thus the difference of the lattice area of $B$ and the lattice area of the union of these triangles does not exceed $\sharp A$ and exceeds 0.

{\bf c)} The case $q=2$ can be resolved similarly with a mixture of observations from (a) and (b): in this case, the number of singularities is positive, unless each of $B_1$ and $B_2$ consists of one blinder, these blinders are parallel, and together they cover $B$. In the latter case, $A$ is contained in a pair of lines parallel to each other and to the plane $M^I$, i.e. has type I10.
\hfill$\square$

\begin{figure}
    \centering
    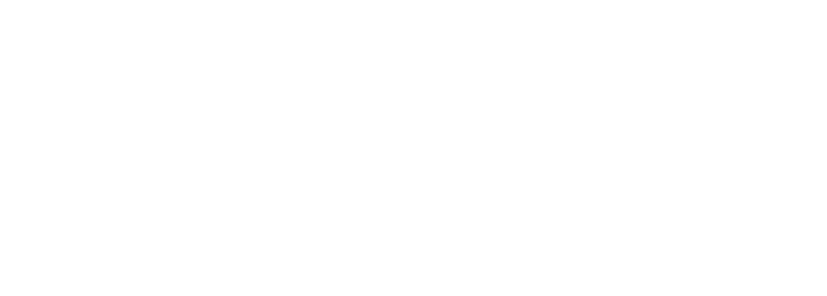
    \caption{The proof of Lemma \ref{lposit}}
    \label{fig:proof4}
\end{figure}

\subsection{Toric spans of proper components}
\begin{defin}
The toric span of a connected subset $S$ of the algebraic torus $T$ is the smallest subtorus $G\subset T$ such that $g\cdot S\subset G$ for some $g\in T$.

\end{defin}

Note that such minimal subtorus always exists: it is the intersection of all algebraic subgroups containing $S$ up to a shift, and this subgroup is connected, because $S$ is connected.

\begin{utver}\label{psame}
1) All proper irreducible components of a generic non-exceptional symmetric curve have the same toric span. 

2) The number of intersection points of a proper component with a diagonal component of a generic non-exceptional  symmetric curve is positive and does not depend on the choice of the components. 
\end{utver}
The proof will require the following observation.
\begin{lemma}\label{lminors}
Unless the support set $A$ is of type D or a point $x\in T$ satisfies $m^d(x)=1$, with $d\geq 1$ defined as in Observation \ref{ex:diagcomponents&singularities}, the equations $$f(x)=f(I(x))=0$$
(regarded as linear equations on $f\in\C^A$ for the fixed point $x$) are linearly independent.
\end{lemma}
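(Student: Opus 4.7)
The plan is to reformulate the two equations as a pair of linear functionals on the finite-dimensional space $\C^A$ and reduce linear dependence of this pair to a single multiplicative condition on $x$.

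Writing $f = \sum_{a \in A} c_a x^a$, the linear form $f \mapsto f(x)$ on $\C^A$ is represented by the vector $\mathbf{v}_1 = (x^a)_{a \in A}$, and $f \mapsto f(I(x))$ is represented by $\mathbf{v}_2 = (x^{I(a)})_{a \in A}$, both lying in $(\C^A)^*$. Since $x \in T = (\C^\star)^3$, all entries of $\mathbf{v}_1$ and $\mathbf{v}_2$ are nonzero, so in particular neither vector vanishes. Hence linear dependence of the two forms is equivalent to the existence of $\lambda \in \C^\star$ such that $x^{I(a)} = \lambda x^a$ for every $a \in A$, i.e. $x^{I(a) - a} = \lambda$ for every $a \in A$.

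Next I would unpack $I(a) - a$ in coordinates. Since $I$ swaps $\ru$ and $\rv$ and fixes $\rw$, for $a = (a_1, a_2, a_3)$ we have $I(a) - a = (a_2 - a_1, a_1 - a_2, 0) = -t(a) \cdot m$ as elements of $M$, where $m \in M^{-I}$ is the primitive monomial $uv^{-1}$ and $t(a) = a_1 - a_2$. The condition therefore rewrites as $m(x)^{-t(a)} = \lambda$ for every $a \in A$, or, eliminating $\lambda$, as
\[
m(x)^{t(a) - t(a')} = 1 \quad \text{for every pair } a, a' \in A.
\]

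For the last step I invoke the second characterization of $d$ in Observation \ref{ex:diagcomponents&singularities}: when $A$ is not of type D, the integer $d \geq 1$ is exactly the greatest common divisor of the set $\{t(a) - t(a') : a, a' \in A\}$, so this set generates the subgroup $d\Z \subset \Z$. Consequently the displayed condition for all pairs is equivalent to $m(x)^d = 1$, i.e.\ $m^d(x) = 1$. Contrapositively, if $m^d(x) \neq 1$ then the two linear forms are independent, which is what the lemma claims. The argument is essentially automatic once the correct linear-algebraic reformulation is in place, so I do not anticipate a real obstacle; the only subtlety is to make sure we use the equivalence of the three definitions of $d$ to get from the GCD of differences to the invariant $d$ used throughout the paper.
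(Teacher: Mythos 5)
Your proof is correct and follows essentially the same route as the paper's: both reduce linear dependence of the two evaluation functionals to the condition that $x^{I(a)-a}=(u/v)^{-t(a)}$ is constant over $a\in A$, and then invoke the GCD characterization of $d$ to conclude $m^d(x)=1$. The only cosmetic difference is that the paper normalizes $0\in A$ by a shift (which forces your scalar $\lambda$ to equal $1$ and lets it argue via $2\times 2$ minors against the constant-coefficient column), whereas you keep $\lambda$ and eliminate it by taking ratios over pairs $a,a'\in A$.
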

\begin{proof}
We work in the standard coordinates $x=(u,v,w)$ in the algebraic torus, so that $I(u,v,w)=(v,u,w)$, and $m(x)=u/v$. Since the statement is invariant under shifts of $A$, assume w.l.o.g that $0\in A$ and assume to the contrary that every $2\times 2$ minor of the above system of linear equations vanishes. 

For the minor corresponding to the constant coefficient of $f$ and its coefficient at another monomial $\mu\in A$, this implies $\mu(x)/\mu(I(x))=1$. The latter quotient has the form $(u/v)^{d_\mu}$ for some $d_\mu\in\Z$. Since $\GCD(d_\mu,\, \mu\in A)=d$ for $d_\mu$ so defined, the equalities $(u/v)^{d_\mu}=1$ imply $(u/v)^{d}=m^d(x)=1$.
\end{proof}

{\it Proof of part 1 of Proposition \ref{psame}.} For a subtorus $G\subset T$ and a polynomial $f\in\C^A$, let $G_f$ be the union of all irreducible components of the curve $\{f(x)=f(I(x))=0\}$ having toric span $G$. Let $G_A$ be the closure of the union $\bigcup_{f\in\C^A} \{f\}\times G_f$ in the product $\C^A\times\{m^d\ne 1\}$. Consider the inclusions:

$$G_A \subset \{(f,x)\,|\,f(x)=f(I(x))=0\}\subset\C^A\times\{m^d\ne 1\}$$

(1) If $G_f\ne\varnothing$ for generic $f\in\C^A$, then the set on the left has codimension 2 in the set on the right, at every point $(f,x)$ for generic $f$.

(2) The projection of the set in the middle to $\{m^d\ne 1\}$ is a vector bundle, whose fiber has codimension 2 in $\C^A$ (by Lemma \ref{lminors}), thus the set in the middle is itself irreducible of codimension 2 in the set on the right.

From (1) and (2), we deduce that the set on the left has a codimension 2 component and is contained in the middle set, which is irreducible  and of codimension 2. Thus the two sets are equal: if a generic symmetric curve has a proper component with a toric span $G$, then all of its proper components have toric span $G$. \hfill$\square$

\begin{rem}
Notice how this reasoning fails if we try to extend it to diagonal components: in order to do so, we should substitute the set $\{m^d\ne 1\}$ with the whole torus $T$. In this case, looking at the dispaly formula in the proof, the projection of the middle set to $T$ would have fibers of codimension 2 over $\{m^d\ne 1\}$ and codimension 1 over $\{m^d= 1\}$, failing to be a vector bundle.
\end{rem}

{\it Proof of part 2.} Similarly to part 1, for a given diagonal component $H\subset\{m^d=1\}$, integer $k\in\Z$ and $f\in\C^A$, let $k_f$ be the union of all irreducible components of the symmetric curve $\{f(x)=f(I(x))=0\}$ intersecting $H$ at $k$ points, and let $k_A$ be the closure of the union $\bigcup_{f\in\C^A} \{f\}\times k_f$ in the product $\C^A\times\{m^d\ne 1\}$. Consider the inclusions:

$$k_A \subset \{(f,x)\,|\,f(x)=f(I(x))=0\}\subset\C^A\times\{m^d\ne 1\}$$

Again, if $k_f\ne\varnothing$ for generic $f\in\C^A$, then the set on the left has codimension 2 in the set on the right at its generic point. Since the set in the middle is irreducible of codimension 2 (see part 1), it equals the set on the left: if a generic symmetric curve has a proper component intersecting the diagonal component $H$ at $k$ points, then all of its proper components intersect $H$ at $k$ points.

Thus the proper part of the symmetric curve intersects the diagonal component $H$ at $k\times($the number of proper components$)$ points. By Theorem \ref{thsingnum}, the latter quantity is independent on the choice of $H$, and by Lemma \ref{lposit} it is positive, thus every proper component intersects every diagonal component at the same positive number of points.
\hfill$\square$\\

\begin{sledst}\label{cpropersymm}
A proper component of a generic non-exceptional symmetric curve is itself symmetric with respect to the involution $I$.
\end{sledst}
\begin{proof}
Assume that a proper component $C_1$ is symmetric to another proper component $C_2$. By Proposition \ref{psame}, $C_1$ intersects the diagonal $m=1$ at some point $x$. Since $x\in C_2$ as well by symmetry, the proper part of the symmetric curve is not locally irreducible at $x$, and hence is not smooth, which contradicts Theorem \ref{thsingnum}. 
\end{proof}

\subsection{Planar components}
Our next goal is to understand for which $A$ a proper component of the generic symmetric curve is {\it planar}, i.e. its toric span is strictly smaller than $T$. 

\begin{lemma}\label{lsymnondeg}
Given a monomial $\mu\in M^I$ and an $I$-nondegenerate polynomial $f\in\C^A$, for generic $c\in\C$ the system of equations $f(x)=f(I(x))=\mu(x)-c=0$ is nondegenerate (see e.g. Definition 1.1 in \cite{crit}).

\end{lemma}
\begin{rem}
We shall see from the proof that the system is nondegenerate at infinity (i.e. the condition of Definition 1.1 in \cite{crit} is satisfied for all non-zero $l$) once $f$ satisfies a weaker assumption: 
for every blinder $E\subset A$, the polynomial $f|_E$ is not identical zero, and, for every pair of compatible non-blinders $P\subset A$ and $Q\subset IA$, the system $f|_Pf\circ I|_Q=0$ defines a complete intersection in $T\simeq\CC^3$.
\end{rem}
\begin{proof} Denote the system that we study by $f=g=h=0$, and the face of $A$ at which a liner function $l:M\to\R$ attains its maximum, by $A^l$. We want to prove that, for every non-zero $l\in M^*$, the system $f^l=g^l=h^l=0$ has no roots in $T$ (see Definition 1.1 in \cite{crit} for notation).

For any $l\in M^*$ such that $l\cdot\mu\ne 0$, the polynomial $h^l$ is a monomial, thus has no roots in $T$.

For any $l\in M^*$ such that $l\cdot\mu=f\cdot m=0$, and $A^l$ is a blinder, $g^l=h^l=0$ has no roots for generic $c\in\C$, as the Newton polytopes of $g^l$ and $h^l$ are parallel segments. 

For any other non-zero $l\in M^*$, the polynomials $f^l=f|_{A^l}$ and $g^l=g|_{(IA)^l}$ define a complete intersection, thus so do $f^l=g^l=h^l=0$. 
\end{proof}

\begin{rem}\label{remsymnondegmv}
1. In the setting of Lemma \ref{lsymnondeg}, the number of solutions of the system equals the mixed volume of the support sets (by the Bernstein formula, see \cite{bernst} or e.g. Sections 2.4-2.5 in \cite{crit}). This in turn equals the mixed area of the projections $A/\mu$ and $I(A)/\mu$ (e.g. by Remark 2.20 in \cite{crit}), as soon as $\mu\in M$ is primitive. 

2. How many of these solutions belong to the diagonal components of the symmetric curve? To find this, we should solve the system $f(x)=m(x)^d-1=\mu(x)-c=0$. It is nondegenerate for generic $f$ and $c$ by the same considerations, 
so the number of roots equals the mixed volume of $A$ and the support segments of the two binomials. This equals $$d\cdot h,$$ where $h$ is the lattice length of the image of the convex hull of $A$ under its projection $M\to M/L\simeq\Z^1$ along the plane $L$ spanned by $m$ and $\mu\in M$. 

3. From (1) and (2), we deduce the inequality $MV(A/\mu,I(A)/\mu)\geqslant d\cdot h$. We now classify for what $A$ it becomes an equality (or, equivalently, $\{\mu=c\}$ does not intersect the proper part of the symmetric curve for generic $c$, i.e. $\mu$ is constant on every proper component).
\end{rem}

\begin{lemma}\label{lmvcompare}
Let $J:\Z^2\to\Z^2$ be the involution $(x,y)\mapsto(x,-y)$. Let $P$ be a lattice polygon, and $L(P)$ the length of its  projection to the horizontal axis. Then we have
$$MV(P,J(P))>2L(P),$$
except for the following cases (see Figure \ref{fig:lemma}):

-- $P$ is contained in a vertical line;

-- $P$ is up to a shift contained in the stripe $0\leqslant y\leqslant 1$;

-- $P\cap\Z^2$ is up to a shift contained in the set $\{y=0\}\cup\{(0,1),(0,-1)\}$.
\end{lemma}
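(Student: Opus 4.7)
Both sides of the inequality are invariant under vertical translation of $P$, so assume $P\subseteq\{0\le y\le H\}$ with $H\in\Z_{\ge 0}$. Write the horizontal slices of $P$ at height $t$ as $[a(t),b(t)]$ with $a$ convex and $b$ concave, and set $w(t)=b(t)-a(t)$, $w_i=w(i)$, $L=L(P)$. The degenerate cases $H=0$ ($P$ a horizontal segment, case~2) and $L=0$ ($P$ contained in a vertical line, case~1) reduce both sides to $0$, so assume $H,L\ge 1$. The body $K=P+J(P)$ is convex and $J$-symmetric, so $\mathrm{Area}(K)=2\int_0^H W(y)\,dy$ where $W(y)$ is its horizontal slice length; from the slice formula one reads off $W(0)=2L$ (every pairing $p+J(p)=(2p_x,0)$ lies on the $x$-axis and sweeps the doubled $x$-projection) and $W(H)=w_0+w_H$. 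Concavity of $W$ on $[0,H]$ and the secant inequality then yield
\[
MV(P,J(P))\;\ge\;2LH+H(w_0+w_H)-2\,\mathrm{Area}(P).\tag{$\star$}
\]
For a lattice polygon, $w$ is piecewise linear with breakpoints at integer heights, hence $\mathrm{Area}(P)=\tfrac{w_0+w_H}{2}+\sum_{i=1}^{H-1}w_i$; substituting reduces $(\star)\ge 2L$ to $\sum_{i=1}^{H-1}w_i\le (H-1)L+(H-1)(w_0+w_H)/2$, which is immediate from $w_i\le L$. Equality in $(\star)\ge 2L$ forces either $H=1$, or $H\ge 2$ with $w_0=w_H=0$ and $w_i=L$ for every interior integer $1\le i\le H-1$.

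\textbf{Equality analysis.} Whenever the right-hand side of $(\star)$ strictly exceeds $2L$ we are done. Otherwise, we must show strict concavity of $W$ unless $P$ is exceptional. For $H=1$, a short calculation from the slice formula gives $W(y)=2L-(|\alpha_0-\alpha_1|+|\beta_0-\beta_1|)y$, linear on $[0,1]$, and $MV=2L$ for every trapezoid in the stripe -- this is case~2. For $H=2$ with $w_0=w_2=0$ and $w_1=L$, the analogous calculation yields $W(1)=L+|a_0-a_2|$, where $a_0,a_2$ are the $x$-coordinates of the isolated bottom and top vertices of $P$; linearity of $W$ on $[0,2]$ therefore forces $a_0=a_2$, and after a horizontal shift $P\cap\Z^2\subseteq\{y=0\}\cup\{(0,\pm 1)\}$ -- case~3. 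For $H\ge 3$, the forced widths $w_i=L$ at every interior integer combined with the integrality of the lattice vertices $(a_i,i),(b_i,i)$ and the convex/concave interpolation of $a,b$ imply $W(y)\ge L$ at every interior integer $y$, which strictly exceeds the secant value $2L(1-y/H)$ at $y=H-1$ whenever $H\ge 3$; the concavity step is therefore strict and $MV>2L$.

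\textbf{Main obstacle.} The core difficulty is the two-layer equality analysis: one must simultaneously track strictness in the concavity bound and in the subsequent area estimate. The bound $(\star)$ alone is already tight in many ``thin'' lattice configurations -- including the non-exceptional $H=2$ case with $a_0\ne a_2$ -- so strictness of $MV>2L$ there comes entirely from strictness of the concavity of $W$. The identity $W(1)=L+|a_0-a_2|$ makes this transparent for $H=2$; the $H\ge 3$ case is the combinatorial heart of the proof, where the integrality of the lattice vertices $(a_i,i),(b_i,i)$ is precisely what prevents $P+J(P)$ from being a perfect ``linear-slice'' trapezoid.
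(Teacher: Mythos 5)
Your proof is correct, but it takes a genuinely different route from the paper's. The paper argues by monotonicity of the mixed area: it picks the segment $S$ joining a leftmost to a rightmost vertex, shows that any non-exceptional $P$ contains a sub-polygon $P'\supset S$ of one of four explicit minimal types (two triangles, a quadrilateral, a segment) with $L(P')=L(P)$, and verifies $\MV(P',J(P'))>2L(P')$ for each type by direct computation. You instead work globally: writing $\MV(P,J(P))=\mathrm{Area}(P+J(P))-2\,\mathrm{Area}(P)$, you bound $\mathrm{Area}(P+J(P))$ from below by the concavity (Brunn) of the horizontal slice function $W$ of the symmetric body $P+J(P)$, combine this with the exact trapezoid-rule expression for $\mathrm{Area}(P)$ coming from integrality of the vertex heights, and then carry out a two-stage equality analysis ($W(1)=L+|a_0-a_2|$ for $H=2$, and $W(H-1)\geqslant L>2L/H$ for $H\geqslant 3$). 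Your approach buys a self-contained argument that avoids the paper's unelaborated classification step ("taking the convex hull of $S$ and one or two more points\dots unless $P$ is exceptional") and makes transparent exactly where the inequality can degenerate to equality; it also looks more amenable to higher-dimensional generalization. The paper's approach buys brevity and reduces everything to four one-line mixed-area computations. Two cosmetic slips in your write-up: for $H=0$ with $L>0$ only the left-hand side vanishes (the case is still exceptional, so nothing is lost), and the verification that $\MV=2L$ holds for every trapezoid in the stripe (and in case 3) is not needed for the lemma as stated, which only asserts the strict inequality outside the exceptional list.
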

One can already recognize that the three exceptional cases of this lemma are the shadows of exceptional support sets of types C1, I1 and I2.

\begin{figure}[h]
    \centering
    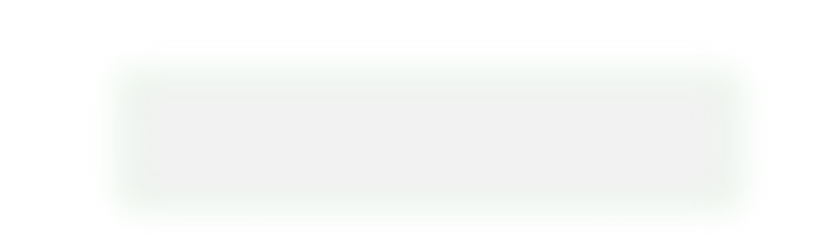
    \caption{The exceptional cases in Lemma \ref{lmvcompare}}
    \label{fig:lemma}
\end{figure}

\begin{proof} The sought inequality can be directly verified if $P$ has one of the following four types up to reflections w.r.t. the coordinate axes:

1. A triangle with vertices $(0,0),(a,0),(b,c),\, a\geqslant b\geqslant 0,\, |c|>1$; 

2. A triangle with vertices $(0,0),(a,1),(b,-1),\,a>b\geqslant 0$; 

3. A quadrilateral with vertices $(0,0),(a,1),(b,-1),(c,0),\,c\geqslant a>b\geqslant 0$;

4. A segment with end points $(0,0)$ and $(a,b),\, a>0, b\geqslant 2$.

Now, coming back to arbitrary $P$, let $S$ be a segment connecting one of its leftmost vertices to one of its rightmost vertices. Taking the convex hull of $S$ and one or two more points of $P$, we can find a lattice polygon $P'$ of type (1-4) so that $S\subset P'\subset P$, unless $P$ is of one of the exceptional types from the statement of the lemma. Notice that $L(S)\leqslant L(P')\leqslant L(P)=L(S)$ implies $L(P')=L(P)$.
Now we have (by the monotonicity of the mixed volume): $$\MV(P,J(P))\geqslant\MV(P',J(P'))>2L(P')=2L(P).$$
\end{proof}

\begin{utver}\label{pflat}
If the toric span of some irreducible component of the proper part of a generic symmetric curve supported at $A$ is not the full torus, then $A$ is exceptional.

\end{utver}
{\it Proof.} If the toric span $T'$ of some (hence every by Proposition \ref{psame}) proper irreducible component of a generic symmetric curve is not the full torus, then it is  contained in the subtorus $m'=1$ for some primitive monomial $m'$.

First assume $m'\in M^{-I}$, i.e. $m'=-m$, then the shifted subtorus $\{m'(x)=c\}$ does not intersect the diagonal $\{m^d(x)=1\}$ (unless it coincides with one of the components of the diagonal). This implies that the proper component with the toric span $\{m'(x)=c\}$ does not intersect the diagonal components, thus $A$ is exceptional by Proposition \ref{psame}.2.

So from now on we may assume $m'\notin M^{-I}$, and, as a consequence, $m'$ and $I(m')$ generate a sublattice $L$ (of dimension 1 or 2) that non-trivially intersects the plane $M^I$.

Thus we may moreover assume with no loss of generality that $m'\in M^I$: Corollary \ref{cpropersymm} ensures that the toric span $T'$ belongs both to the subtorus $m'=1$ and $I(m')=1$. In this case, changing $m'$ to a primitive element in the (non-trivial) intersection of the plane $M^I$ and $L$, the toric span $T'$ still belongs to the subtorus $m'=1$. 

By Proposition \ref{psame}, we now see that every proper component is contained in a shifted torus of the form $m'=c_i$. Thus a shifted torus $m'=c$ for generic $c\ne c_i$, intersects the symmetric curve only at its diagonal points.

In particular, the number of solutions $\sharp_1$ of the system $f(x)=f(I(x))=m'(x)-c=0$ for generic $c$ equals the number of solutions $\sharp_2$ of the system $f(x)=m(x)^d-1=m'(x)-c=0$.

The numbers $\sharp_1$ and $\sharp_2$ are computed in Remark \ref{remsymnondegmv} and can be compared by Lemma \ref{lmvcompare}: define the linear map $\Phi:M\to\R^2$ by sending the monomials $u,v$ and $m'$ to $(1,1/d), (1,-1/d)$ and $(0,0)$ respectively, and apply  Lemma \ref{lmvcompare} to the convex hull $P$ of the image of $A$ (note that,  by the definition of the denominator $d$, the polygon $P$ is a lattice polygon as soon as $A$ is shifted to contain 0). We shall see that $$\sharp_1=\MV(A/m',I(A)/m')=\begin{vmatrix}
1 & -1/d \\
1 & 1/d
\end{vmatrix}\cdot\MV(P,J(P))>d\cdot L(P)=d\cdot h=\sharp_2,$$ unless $P$ is one of the exceptions in Lemma \ref{lmvcompare}.

Since we know that $\sharp_1=\sharp_2$, we conclude that $P$ is one of the exceptions in Lemma \ref{lmvcompare}. This implies that $A$ is of the exceptional type $E,D,C_1,I_1$ or $I_2$.

\hfill$\square$

\subsection{Proof of Theorems \ref{thdim}--\ref{thintroirr}(2) for $I$-nondegenerate $f\in\C^A$}\label{ssproofs1} 

For the exceptional types of $A$, to which these theorems apply, their statements can be directly verified (see Remarks \ref{exadisconn} and \ref{exaI}). So we assume that $A$ is not exceptional.

By Definition \ref{defInondeg}, the symmetric curve $f(x)=f(I(x))=0$ is a union of two transversal smooth curves: the diagonal part $f(u,v,w)=u^d-v^d=0$ and the proper part (the rest). 

By Lemma \ref{lposit} and Proposition \ref{psame}, every proper component intersects every diagonal component by the same positive number of points, hence their union is connected, and neither of the two parts is empty.
By Proposition \ref{pflat}, no component of  the proper part is contained in a proper subtorus of $T$.
\hfill\l{$\square$}

\section{Irreducibility of the proper part of a symmetric curve
}\label{sirredproof}

\subsection{The plan} Our strategy to prove Theorem \ref{thintroirr}(1) will be as follows:

1) Classify all minimal (by inclusion) non-exceptional support sets;

2) Collect a bag of tricks to prove irreducibility for each of them;

3) Deduce irreducibility for all the other non-exceptional support sets inductively, starting from a minimal non-exceptional support set, and adding further lattice points one by one.

\begin{rem}
This approach to the proof requires a significant amount of serendipity: one should ``guess from nowhere'' the list (E) of all exceptional support sets on one hand, and the list (M) of all minimal non-exceptional sets on the other hand. Once we have both of them, it is relatively straightforward to check the three facts comprising the base of the induction in the step 3 of the plan above:

-- for every support set in (E), the symmetric curve has a reducible proper part;

-- for every set from (M), the symmetric curve has an irreducible proper part;

-- every set outside (E) contains a set from (M).

Of course, the dearest wish here is to invent a proof that would not require serendipity and produce the list (E) (and (M) if necessary) automatically. 
Of them, the list $(E)$ can be guessed beforehand from Remark \ref{bigq00}. So the key problem is to invent a proof that would not require the list (M) at all, or invent a way to guess the list (M) beforehand as well.
\end{rem}

The first step of the plan will require the discussion of concrete configurations of lattice points in a special position with respect to the involution $I$. To facilitate this discussion, we shall use the notation that we allow ourselves to introduce by an example rather than by a (tedious) general description. By writing that the finite lattice set $A=\{b_0,\ldots,b_4\}\subset M$ has the shape
$$
\begin{array}{c c c c|c c c}
   b_0 & & & &     e_0  & & \\
   b_1 & & & &     e_1  & * & \\
   & b_2 & & &     e_2  & * & * \\
   & & b_3 & b_4 & e_3  & * & * \\   
   \midrule
   c_1 & c_2 & c_3 & c_4 &  2 & 1 & 1 \\
\end{array}
$$
we mean the following:

-- under the projection $M\to M/M^I$, the points $b_0$ and $b_1$ go to $c_1$, and $b_i$ to $c_i$ for $i>1$;

-- the order of the images $c_1,\ldots,c_4$ in the line $M/M^I$ is as in the bottom row of the table (and in particular $c_i\ne c_j$ for $i\ne j$; we do not care about the orientation of $M/M^I$ and the direction in which we read the bottom line);

-- under the projection $M\to M/M^{-I}$, the points $b_3$ and $b_4$ go to $e_3$, and $b_i$ to $e_i$ for $i<3$;

-- the last column says that the affine span of $e_2$ and $e_3$ is one-dimensional (i.e. $e_2\ne e_3$), the second last column says that the affine span of $e_1,e_2,e_3$ is one-dimensional as well (i.e. $e_1$ is on the line through $e_2$ and $e_3$), and the third last column says that  the affine span of $e_0,e_1,e_2,e_3$ is two-dimensional (i.e. $e_0$ is not on the line through $e_2$ and $e_3$).

\vspace{1ex}

Note that we allow any affine dependencies between $e_i$'s that are not prohibited by the last columns of the table, e.g. $e_1$ may coincide with $e_2$ or $e_3$ in our example.

If we do not want to give names to the projected points, then we write $*$ instead of $c_i$'s and $e_j$'s in the table.

\begin{rem}\label{kh15doesnothelp}
Before proceeding to our plan to prove irreducibility, we briefly discuss how known methods fail to work here. Most notably,  \cite{kh15} claims that the algebraic set $V\subset\CC^n$ given by the nondegenerate equations $f_1=\cdots=f_k=0$ supported at $A_1,\ldots,A_k\subset\Z^n$ is irreducible, unless $m$ of the $A_i's$ can be shifted to the same $m$-dimensional sublattice (for an explanation of this obvious exception, see e.g. Examples 2.16-2.17 in \cite{crit}).

We could try to apply this to the study of symmetric curves in the following two ways.

1) Apply \cite{kh15} to the system of equations $f=f\circ I=0$. This is impossible, because the symmetric curve is singular in most cases (Theorem \ref{thsing}), rendering the system $f=f\circ I=0$ degenerate.

2) Assuming $0\in A$ with no loss of generality, apply \cite{kh15} to the system $$f=g=0,\quad g(u,v,w):=\frac{f(u,v,w)-f(v,u,w)}{u^d-v^d}=0,$$ which describes the symmetric part of the curve. This is impossible too: once the support set $A$ has an edge $E$ disjoint from the plane $M^I$, the support set of $g$ contains the segment $E':=E-(d,d,0)$. In many cases (including whenever $E$ is a blinder), $E$ and $E'$ are compatible edges of the support sets of $f$ and $g$, and $f|_E$ coincides with $g|_{E'}$ up to a monomial multiplier.

In Section \ref{snext} we discuss possible approaches to adapting the classical technique of \cite{kh15} to symmetric complete intersections; in this section we stick to the plan presented at the beginning.
\end{rem}
\begin{rem}\label{rem:kho}
It is never mentioned in \cite{kh15} that the nondegeneracy of the complete intersection 
is a sufficient genericity assumption for the irreducibility theorem 17 therein. One can derive it carefully examining the proofs (starting from the proof of Lemma 28). However, it is easier to see that the nondegeneracy implies the irreducibility from the facts that

1) all nondegenerate complete intersections are smooth and homeomorphic to each other (see e.g. Corollary 2.16 in \cite{schon}), and

2) almost all of them are connected by \cite[Theorem 17]{kh15} (because the connectedness is equivalent to the irreducibility for smooth varieties).
\end{rem}

\subsection{Inductive irreducibility}
We start with a statement that connectivity of a generic curve or irreducibility of its proper part for a certain support set $A$ implies irreducibility for the larger support set $A\cup\{a\}$ under some conditions on $A$ and $a$.
On one hand, it will offer an inductive step for stage (3) of our plan, and on the other hand it will offer one of the tricks for stage (2), because it allows to deduce irreducibility for some minimal non-exceptional support sets from connectivity of exceptional ones.

\begin{utver}[\bf Inductive irreducibility] \label{propind}
Consider support sets $A\subset A'\subset M$ such that $A'=A\sqcup \{a\}$ is not exceptional, and their convex hulls have the same projection along $M^I$. Denote $d:=d(A)$ and $d':=d(A')|d$ the respective denominators (see Definition \ref{def:diag&properparts}).
Assume one of the two:

1) $d'=d$, and the proper part of the generic symmetric curve supported at $A$ is irreducible and non-empty;

2) $d'<d$, and the generic symmetric curve supported at $A$ is connected (i.e. not of type D, E, C1, C2 or I10 according to Theorem \ref{thconn}).

Then the proper part of the generic symmetric curve supported at $A'$ is irreducible.
\end{utver}
{\it Proof of Part 1.}
From the assumption, we can choose a primitive monomial $m'\in M^I$ such that the convex hulls of $A$ and $A'$ have the same projections along the line generated by $m'$. Indeed, observe that $A'/M^I$ has dimension 1 since it is not of type $D$. If $\conv(A')$ has dimension at most $2$, any $m'$ ortogonal to it will do. If $\conv(A')$ has dimension $3$, any $m'$ will do if $a$ is not a vertex of $\conv(A')$, otherwise take $m'$ whose scalar product with $A'$ has a strict maximum at $a$.

By Proposition \ref{pflat}, since $A'$ is not exceptional, a generically shifted torus $m'=c$ transversally intersects every proper irreducible component of the generic symmetric curve supported at $A'$ at some point $x_i$. So it is enough to prove that the points $x_i$ can be connected with a path in the smooth part of this curve. 

In order to construct such a path, choose a generic polynomial $f\in\C^A$ and generic $c\in\C$, and consider the perturbation $f_\varepsilon(x)=f(x)+\varepsilon a(x)\in\C^{A'}$. By Remark \ref{remsymnondegmv}, the systems $$f_\varepsilon(x)=f_\varepsilon(I(x))=m'(x)-c=0\eqno{(*_\varepsilon)}$$ have the same number of solutions $N$ for all small $\varepsilon$ (including $\varepsilon=0$) and among them the same number of solutions $N'\leqslant N$ belong to the diagonal part of the curve $$f_\varepsilon(x)=f_\varepsilon(I(x))=0.\eqno{(**_\varepsilon)}$$

Since the roots of the system $(*_\varepsilon)$ are BKK-nondegenerate by Lemma \ref{lsymnondeg}, we have germs of analytic curves $$x_i:(\C,0)\to T,\,\varepsilon\mapsto x_i(\varepsilon),\,i=1,\ldots,N,$$
such that $x_i(\varepsilon),\,i=1,\ldots,N$, are the roots of the system $(*_\varepsilon)$, and those with $i=1,\ldots,N'$ belong to the diagonal part of the curve $(**_\varepsilon)$.

Any roots $x_i(0)$ and $x_j(0)$ in the proper part of the curve $(**_0)$ (i.e. for $N'<i<j\leqslant N$) can be connected to each other with a path $\gamma_0$ in the smooth part of this curve, because its proper part is irreducible by assumption. Now the crucial observation is that for a small $\varepsilon$ we can construct the perturbation $\gamma_\varepsilon$ of $\gamma_0$ connecting $x_i(\varepsilon)$ to $x_j(\varepsilon)$ in the smooth part of the curve $(**_\varepsilon)$.

{\it Proof of Part 2.} The difference with the part 1 is that we may be unable to connect $x_i(0)$ and $x_i(0)$ in the smooth part of the curve $(**_0)$. However, in this case it is enough to connect them via the set $S:=\{f_0(x)=f_0(I(x))=0\}\setminus\{m^{d'}=1\}$ with a path that we shall denote by $\gamma_0$. This is possible, because $d'<d$ ensures that $S$ contains some diagonal component of the curve $(**_0)$, and every its diagonal component intersects every its proper component by Proposition \ref{psame}.2. 
If the resulting path $\gamma_0$ passes through singular points $z_k\in S$, this should not frighten us, because these singularities will smooth out (with connected Milnor fibers $M_k$) as we perturb $\varepsilon$. Splitting the path $\gamma_0$ into pieces $\gamma_0^k$ at the singular points $z_k$, we can perturb these pieces into the paths $\gamma_\varepsilon^k$, connecting the Milnor fibers $M_k$ in the smooth part of the curve $(**_\varepsilon)$. Then we can connect the paths $\gamma_\varepsilon^k$ via the connected Milnor fibers $M_k$ into one path $\gamma_\varepsilon$ connecting $x_i(\varepsilon)$ and $x_i(\varepsilon)$ in the smooth part of the curve $(**_\varepsilon)$.
\hfill$\square$

\subsection{Minimal non-exceptional support sets} For every such $A$, one might hope to deduce the irreducibility of the proper part of the generic symmetric curve $f(x)=f(I(x))=0,\,f\in\C^A$, by Inductive irreducibility as follows. 
\begin{exa}\label{ex:skew}{\bf (The skew tetrahedron, Figure \ref{fig:minimal})} Let $A'$ have the shape
$$
\begin{array}{c c c c|c c c}
   b_1 & & & &     e_1  & * & * \\
   & b_2 & & &     e_2  & * & \\
   & & b_3 & &     e_3  & & \\
   & & & b_4 &     e_4  & * & * \\   
   \midrule
   c_1 & c_2 & \frac{c_1+c_4}{2} & c_4 &  2 & 1 & 1 \\
\end{array}
$$

Then the corresponding symmetric curve has irreducible proper part by Proposition \ref{propind}.2, applied to $A=\{b_1,b_3,b_4\}\subset A'$. Indeed, it is clear from the $c_i$'s that $A$ cannot be of type D, E and I1. The first two columns on the right guarantee that $b_3$ does not belong to the plane parallel to $M^{-I}$ containing $b_1$ and $b_4$, that is $A$ is not of type $C_1$. Considering the last column, we deduce that any line parallel to $M^{-I}$ contains at most one point in $A$ and that $A$ is not of type C2. Eventually, we can assume, up to a shift, that $c_1=0$. Thus $d=\vert c_4/2\vert$, $d'=\GCD(d,c_2)<d$, and $A$ is of type $I2$, so Proposition \ref{propind}.2 is applicable.
\end{exa}
However, some non-exceptional support sets $A'$ cannot be split into $A\cup\{a\}$ fitting the assumptions of the Inductive irreducibility.
\begin{exa}\label{ex:high}{\bf (The high tetrahedron, Figure \ref{fig:minimal})}
Let $A'$ have the shape
$$
\begin{array}{c c c c|c}
   b_1 & & & b_4 &     e_1  \\
   & b_2 & & &     e_2  \\
   & & b_3 & &     e_3 \\
   \midrule
   c_1 & c_2 & c_3 & c_4 &  2 \\
\end{array}
$$

Then there is no three-point subset $A\subset A'$ to which the Inductive irreducibility would be applicable. Indeed, removing either $b_2$ or $b_3$ 
leads to a support $A$ of type C1. In particular Proposition \ref{propind}.2 never applies. According to Remark
\ref{exadisconn}, 
the proper part of the generic symmetric curve supported on $A$ is reducible in general so that Proposition \ref{propind}.1 does not apply either. Removing either $b_0$ or $b_3$ changes the projection along $M^I$ and is therefore not allowed.
\end{exa}
\begin{exa}\label{ex:low}{\bf (The low tetrahedron, Figure \ref{fig:minimal})} Let $A'$ 
have the shape
$$
\begin{array}{c c c|c c c}
   b_1 & & &     e_1  & * & \\
   b_2 & & &     e_2  & * & \\
   & b_3 & &     e_3  & & * \\
   & & b_4 &     e_4  & * & * \\   
   \midrule
   c_1 & \frac{c_1+c_4}{2} & c_4 &  2 & 2 & 1 \\
\end{array}
$$

Then there is no three-point subset $A\subset A'$ to which the Inductive irreducibility would be applicable. Removing either $b_1$ or $b_2$ leads to a case where $d=d'$ and $A$ is of type I2. According to Remark \ref{exaI}, the proper part of the generic symmetric curve supported on $A$ is reducible in general. Removing $b_4$ is not allowed since it would change the projection along $M^I$. Eventually, removing $b_3$ leads to a case where $d'=d/2$ and $A$ is of type I10. 
\end{exa}
\begin{exa}{\bf (The triangle)} Let $A'$ 
have the shape
$$
\begin{array}{c c c|c c c}
   b_1 & & &     e_1 \\
   & b_2 & &     e_2 \\
   & & b_3 &     e_3 \\   
   \midrule
   c_1 & c_2\ne\frac{c_1+c_3}{2} & c_3 &  2 \\
\end{array}
$$

Then there is no two-point subset $A\subset A'$ to which the Inductive irreducibility would be applicable.
\end{exa}
At least, fortunately, these examples are the only ones.
\begin{lemma}\label{lminnonexc}
A set $A'$ is non-exceptional, if and only if it contains a subset $B$ of one of the preceding four types, such that the projections of $\conv (A')$ and $\conv (B)$ along $M^I$ coincide.
\end{lemma}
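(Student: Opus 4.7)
We prove the two directions separately, using the projections $\pi : M \to M/M^I$ and $q : M \to M/M^{-I}$, recalling that $I$ acts as $-1$ on $M/M^I$ and trivially on $M/M^{-I}$.

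For ($\Leftarrow$), the key observation is that non-exceptionality is monotone under supersets: each of the six exceptional types (D, E, C1, I1, C2, I2) requires $A$ to lie in a fixed affine configuration (a plane, a line, or a specific union of a plane parallel to $M^I$ and a line parallel to $M^{-I}$ in the C2 and I2 cases), and this containment is automatically inherited by every subset. It therefore suffices to verify directly that each of the four minimal types is itself non-exceptional. This is a finite check: $|\pi(B)| \geq 3$ excludes D and I1; the 2-dimensional affine span of some specified subset of $e$-labels excludes E and C1; the combination of $|\pi(B)| = 4$ (for the skew and high tetrahedra), $c_2 \neq (c_1+c_3)/2$ (for the triangle), and the 2-dimensional span of $\{e_1,e_2,e_4\}$ (for the low tetrahedron) excludes I2, because points on a line $L \parallel M^I$ project to a single line in $M/M^{-I}$; and a similar inspection of $\pi$-fiber sizes, using that points on a line parallel to $M^{-I}$ share the same $e$-label, excludes C2.

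For ($\Rightarrow$), let $A'$ be non-exceptional and set $C = \pi(A')$. Non-exceptionality gives $|C| \geq 3$ (otherwise $A'$ is of type D or I1), and any candidate $B$ must include points projecting to both extremes $c_- = \min C$ and $c_+ = \max C$ in order to match $\pi(\conv A')$. If $|C| = 3$, write $C = \{c_-, c_0, c_+\}$. When $c_0 \neq (c_-+c_+)/2$, we aim for a triangle: pick $b_\pm \in A'_{c_\pm}$ and $b_0 \in A'_{c_0}$ with $q(\{b_-,b_0,b_+\})$ affinely 2-dimensional, and observe that failure across all triples forces $q(A')$ into a single affine line of $M/M^{-I}$, i.e.\ $A'$ into type C1. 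When $c_0 = (c_-+c_+)/2$, we aim for a low tetrahedron: if every fiber were a singleton then $|A'| = 3$ and placing these three points on $L \cup M^I \cup I(L)$ for a suitable line $L \parallel M^I$ (exactly as in the I2 discussion of Example~\ref{exaI}) realizes $A'$ as I2; so some fiber contains two or more points, and we construct the required quadruple by careful selection, with each obstruction again forcing $A'$ into C2 or I2.

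If $|C| \geq 4$, we distinguish whether there is an \emph{$e$-collision}, i.e.\ a pair of points of $A'$ in distinct $\pi$-fibers whose difference lies in $M^{-I}$. If yes, we aim for a high tetrahedron using the colliding pair as $(b_1, b_4)$ and extending by representatives of two further fibers producing a 2-dimensional $e$-span for $\{e_1, e_2, e_3\}$. If no, we aim for a skew tetrahedron by selecting four distinct $c$-values containing $c_-$ and $c_+$ together with a midpoint relation $c_3 = (c_1+c_4)/2$, and verifying the required $e$-collinearity on chosen lifts. The main obstacle is the $|C| = 3$ subcase with $c_0 = (c_-+c_+)/2$: showing that whenever no low tetrahedron exists, $A'$ must be of type I2 or C2. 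This demands careful bookkeeping of how multiple points in the same $\pi$-fiber distribute among $q$-fibers --- using that equality of $e$-labels means lying on a line parallel to $M^{-I}$, and that lines parallel to $M^I$ project onto single lines in $M/M^{-I}$. The $|C| \geq 4$ case follows the same principles but is combinatorially lighter, since larger $|C|$ provides more flexibility to separate or align $e$-labels as needed.
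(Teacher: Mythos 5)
Your backward direction is fine and matches the paper's: each exceptional type is defined by containment in a fixed affine configuration, so every subset of an exceptional set is exceptional, and it remains to check the four minimal types directly. The forward direction, however, fails at its pivotal steps. First, in the $|C|=3$, $c_0\ne\frac{c_-+c_+}{2}$ subcase, the claim that failure of every triple forces $q(A')$ into a line (type C1) is false: take $I(\ru,\rv,\rw)=(\rv,\ru,\rw)$ and $A'=\{(0,0,0),(2,-2,0),(1,0,0),(1,0,1)\}$. Both extreme fibers are singletons with the same $q$-image, so every candidate triple has collinear $q$-images, yet $q(A')$ is affinely two-dimensional; this $A'$ is of type C2, not C1. (Since $M^I\cap M^{-I}=\{0\}$, two distinct points cannot share both labels, so ``all triples fail'' splits into ``$q(A')$ lies in a line'' and ``both extreme fibers are singletons with equal $q$-image''; your argument covers only the first branch.) Second, for $|C|\ge 4$ your trigger for the high tetrahedron --- an $e$-collision between \emph{any} two fibers --- is wrong: the lemma requires $\pi(\conv B)=\pi(\conv A')$, so the colliding pair $b_1,b_4$ must lie over the two endpoints of $C$, with $b_2,b_3$ strictly between them. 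An interior collision is useless as $(b_1,b_4)$ and does not obstruct a triangle; the correct dichotomy (the paper's) is whether the two extreme fibers are singletons with equal projection along $M^{-I}$. Third, in your no-collision branch you aim only for a skew tetrahedron, which needs $\frac{c_-+c_+}{2}\in C$ plus a fourth value strictly between an endpoint and that midpoint; generically neither holds, and the object actually needed is the triangle, which your $|C|\ge 4$ case omits entirely.

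Finally, the genuinely hard subcase --- $|C|=3$ with $c_0=\frac{c_-+c_+}{2}$ --- is left to ``careful selection,'' and even your stated reduction is insufficient: the low tetrahedron requires two points over an \emph{extreme} value of $C$, so ``some fiber contains two or more points'' does not produce one (when only the middle fiber is non-singleton, $A'$ is of type I2 and no subset of any of the four types exists). The paper sidesteps all of this by organizing the entire forward direction around the extreme fibers of $Q=\pi(A')$: if both are singletons with equal antidiagonal projection, one builds a high tetrahedron (the failure modes being exactly C1 and C2); otherwise one picks endpoint lifts $b_1,b_4$ with distinct antidiagonal projections and obtains a skew tetrahedron, a triangle, or a low tetrahedron according to how the remaining points sit relative to the plane through $b_1,b_4$ parallel to $M^{-I}$ and to the midpoint fiber, with each failure mode matching one of I1, I2, C1, C2. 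You would need to redo your case analysis along these lines for the proof to close.
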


\begin{figure}[h]
    \centering
    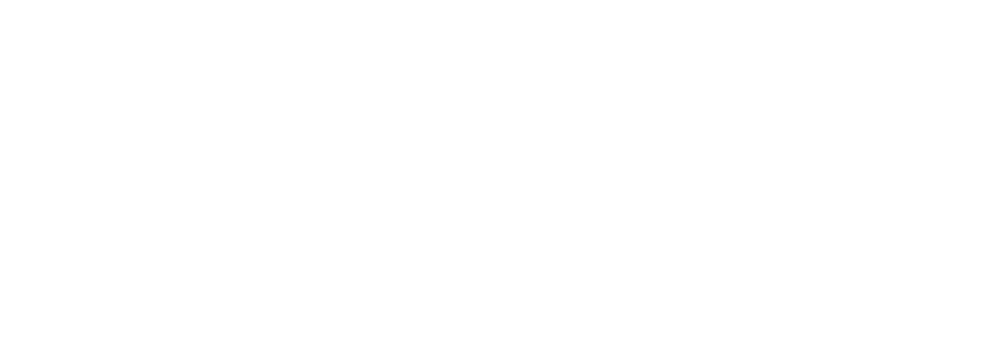
    \caption{The tetrahedra of Examples \ref{ex:skew}, \ref{ex:high} and \ref{ex:low}}
    \label{fig:minimal}
\end{figure}

{\it Proof.} It is the easy direction to check that exceptional sets $A'$ do not contain a subset $B$ of one of the preceding four types: just compare each of the five exceptional types with each of the four preceding examples.

We now discuss the hard direction: a non-exceptional $A'$ contains a subset $B$ of one of the preceding four types. Let $Q$ be the projection of $A'$ along $M^I$. Note that $Q$ has more than one point since $A'$ is not of type D.

1) Assume that each of the two end points of $Q$ has exactly one preimage $b_1, b_4\in A'$, and these two points have the same projection along $M^{-I}$. In this case, if every pair of points in $A'\setminus\{b_1,b_4\}$ were contained in a given plane containing $\{b_1,b_4\}$, then $A'$ itself would be contained in this plane, i.e. it would have the exceptional type C1. If every pair of points in $A'\setminus\{b_1,b_4\}$ were contained in a given plane parallel to $M^I$, then $A'\setminus\{b_1,b_4\}$ itself would be contained in this plane, i.e. it would have the exceptional type C2.
Thus one can find a pair of points $b_2,b_3\in A'\setminus\{b_1,b_4\}$ unlike the aforementioned pairs. Then $\{b_1,b_2,b_3,b_4\}$ is a high tetrahedron.

2) Otherwise, pick preimages $b_1, b_4\in A'$ of the two end points of $Q$ so that their projections along $M^{-I}$ are different, denote the plane through $b_1$ and $b_4$ parallel to $M^{-I}$ by $L$, and denote the set of the middle and the end points of $Q$ by $S$.

2.1) Assume that all the points of $A'$, whose projection along $M^I$ is not the middle of the segment $Q$, belong to the plane $L$. Since $A'$ is not of exceptional type I2, it contains a point $b_2$ whose projection to $A$ is not in $S$. Since $A'$ is not of exceptional type C1, it contains a point $b_3\notin L$, and this point has to project to the middle of $Q$. Thus $\{b_1,b_2,b_3,b_4\}$ is a skew tetrahedron.

2.2) Otherwise $A'$ has a point $b_2$ outside the plane $L$, whose projection to $Q$ is not the middle of $Q$.
If the projection of at least one such $b_2$ to $Q$ is not in $S$, then $\{b_1,b_2,b_4\}$ is a triangle. Otherwise there is to be $b_3\in L$ projecting to the middle of $Q$, since type I1 is forbidden. Moreover the projection of $b_3$ and $b_4$ along $M^{-I}$ are distinct since type C2 is forbidden. Thus $A'$ contains a low tetrahedron.
\hfill$\square$

\subsection{Proof of Theorem \ref{thintroirr}(1) for an $I$-nondegenerate $f\in\C^A$}
\begin{lemma} \label{lbaseindirr}
For $A'$ as in each of the four preceding examples, the proper part of the corresponding symmetric curve is irreducible.
\end{lemma}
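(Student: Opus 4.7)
The four minimal types are handled separately. The skew tetrahedron is already dispatched in Example~\ref{ex:skew}: Proposition~\ref{propind}.2 applies with $A=\{b_1,b_3,b_4\}\subset A'$, since the column constraints imply $A$ avoids all exceptional types D, E, C1, C2, I10, so the symmetric curve supported at $A$ is connected by Theorem~\ref{thconn}, and adding $b_2$ strictly decreases the denominator ($d(A')<d(A)$).

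For the triangle, $|A'|=3$ and one works in explicit coordinates. After translating so $b_1=0$, $f=1+\alpha x^{b_2}+\beta x^{b_3}$ and $g=1+\alpha x^{Ib_2}+\beta x^{Ib_3}$. The proper part is cut out by $f=0$ and $h:=(f-g)/(u^d-v^d)=0$. The plan is to use the monomial $m=u/v$ as a parameter: for each $m\ne0$ the two remaining equations (viewed in the $I$-invariant coordinates) define a finite set, giving a finite covering $\pi\colon\{\text{proper part}\}\to\C^*$. A direct analysis of the branching behaviour of $\pi$, either via an explicit rational parametrization of the covering or via a monodromy computation, yields irreducibility. The predicted generic fiber cardinality is $\sharp A'$, so one only needs to show the monodromy acts transitively.

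For the high and low tetrahedra, one exploits the special collinearity of two vertices. In the high case $b_4-b_1\in M^{-I}$ gives $f=x^{b_1}(c_1+c_4 m^k)+c_2 x^{b_2}+c_3 x^{b_3}$ for some $k\ne 0$; restricting to a generic fiber of $m\colon T\to\C^*$ turns the system into a triangle-type problem in a $2$-torus with the parameter $c_1+c_4 m_0^k$ in place of $c_1$. In the low case $b_2-b_1\in M^I$ gives $f=(c_1+c_2\phi)x^{b_1}+c_3 x^{b_3}+c_4 x^{b_4}$ with $\phi=x^{b_2-b_1}$ an $I$-invariant monomial; slicing by $\phi=\phi_0$ then yields a genuine triangle-type symmetric system in the $2$-torus $\{\phi=\phi_0\}$. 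In both cases irreducibility of the total proper part follows from the triangle case applied fiberwise, combined with a monodromy argument across $\C^*$ that shows the finite fibers glue transitively into a single irreducible curve.

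\textbf{Main obstacle.} The triangle case is the true base of all these reductions and is the only one not amenable to Proposition~\ref{propind}. Making the parametric/monodromy arguments in the high and low cases rigorous will also require care, because the projections $m$ and $\phi$ have different $I$-compatibility properties (the former exchanges the pair of fibers $m=m_0$ and $m=1/m_0$, while the latter is $I$-invariant), so the triangle inputs in the two arguments must be set up in slightly different symmetric settings.
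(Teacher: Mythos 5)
Your treatment of the skew tetrahedron agrees with the paper (Example \ref{ex:skew} plus Proposition \ref{propind}.2), but the other three cases contain genuine gaps.

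The triangle is the crux, and you do not actually prove it: you reduce it to showing that the monodromy of the finite covering $\pi$ over $\C^*$ acts transitively, and then assert that ``a direct analysis of the branching behaviour'' yields this. Transitivity of that monodromy is equivalent to the irreducibility you are trying to establish, so deferring it to an unspecified computation leaves the base case unproved. The paper instead applies Khovanskii's irreducibility theorem \cite[Theorem 17]{kh15} directly to the system $f=g=0$ with $g=(f(u,v,w)-f(v,u,w))/(u^d-v^d)$: after shifting so $b_2\in M^I$, the two Newton polygons (a triangle and a quadrangle) have no parallel edges and do not lie in parallel planes, and the only nontrivial restricted systems are BKK-nondegenerate by Proposition \ref{proptransv}. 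If you want a self-contained monodromy proof here, you must exhibit a concrete generator (e.g.\ a simple branch point whose local monodromy, combined with $2$-transitivity or a cycle structure argument, generates a transitive group), which is a nontrivial piece of work you have not done.

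The reductions of the high and low tetrahedra are also not valid as stated. The fiber of $m$ (resp.\ $\phi$) over a generic point of $\C^*$ is a two-dimensional subtorus, and the intersection of the symmetric curve with it is a \emph{finite set of points}, not a ``triangle-type symmetric curve''; there is no irreducibility statement about a zero-dimensional set for the triangle case to supply. What you actually need is, again, transitivity of the monodromy of the resulting finite covering of $\C^*$, which you assert (``glue transitively'') but do not prove. The paper avoids this entirely: for the low tetrahedron it replaces the system $(*)$ by the equivalent system $f-u^dg=g=0$, whose Newton polytopes are two triangles with no parallel edges, restoring BKK-nondegeneracy at infinity so that \cite[Theorem 17]{kh15} applies; for the high tetrahedron no such trick is available and the paper invokes Proposition \ref{thmorse} (the logarithmic Morse theory argument), whose hypothesis --- a rational translate of $d\cdot S$ in the interior of $\conv(A')$ and no blinders --- is satisfied there. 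You should either supply the monodromy computations in full or route these two cases through the mechanisms the paper uses.
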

{\it Proof.} 
If $A'$ is the triangle $\{b_1,b_2,b_3\}$ (shifted for convenience so that $b_2\in M^I$), the proper part of the symmetric curve is defined by the system $$f=g=0,\quad  g(u,v,w):=\frac{f(u,v,w)-f(v,u,w)}{u^d-v^d}.\eqno{(*)}$$ The Newton polygons of these two equations are a triangle and a quadrangle that have no parallel edges and do not belong to parallel planes. 

In order to show that \cite[Theorem 17]{kh15} is applicable to this system of equations, we should check the following two conditions.

1) No $k$ of the support sets of the equations can be shifted to the same $k$-dimensional subspace: this is obvious from the description of the support sets in our special case.

2) The system of equations is nondegenerate. 
This condition is trivially satisfied for all restricted systems except for the following two (by ``trivially'' we mean that it is satisfied with trivial genericity assumptions, i.e. for every trinomial $f$ at all):

-- the system $(*)$ itself;

-- the system $f|_E=g=0$ for the edge $E$ with the end points $b_1$ and $b_3$.

For both of them, the absence of degenerate roots follows from Theorem \ref{thsingnum}.

\vspace{1ex}

The other three cases, the low, skew and high tetrahedra, have an edge disjoint from the plane $M^I$, no matter how we shift them. Moreover, the system $(*)$ is always degenerate by Remark \ref{kh15doesnothelp}, and we cannot apply \cite[Theorem 17]{kh15} in the same way as for the triangle above.

However, if $A'$ is the low tetrahedron $\{b_1,b_2,b_3,b_4\}$ (shifted and reflected for convenience so that $b_3\in M^I$, and $b_4$ is separated from the monomial $u$ by the plane $M^I$), 
we can transform the system $(*)$ to the equivalent one $f-u^dg=g=0$, inheriting the same properties: it defines the proper part of the symmetric curve and has no degenerate roots. The Newton polygons of the latter system are a pair of triangles with no parallel edges. This again implies nondegeneracy, making \cite[Theorem 17]{kh15} applicable to the latter system (in contrast to the initial one $(*)$).

If $A'$ is a skew or high tetrahedron, we are not aware of any similar tricks reducing the question to \cite[Theorem 17]{kh15}.
However, for the skew tetrahedron, the proof of irreducibility is given in the corresponding example, and the high one is covered by Theorem \ref{thsingnum}.\hfill$\square$

\vspace{1ex}

{\it Proof of theorem \ref{thintroirr}(1) } proceeds by the cardinality of the support set $A$: the base of the induction is provided by Lemma \ref{lbaseindirr}, the induction step by Proposition \ref{propind}, and Lemma \ref{lminnonexc} ensures that the induction exhausts all non-exceptional support sets.
\hfill$\square$

\section{Discussion: what next?}\label{snext}
The problem that we study obviously extends to generic complete intersections in a torus $T\simeq\CC^n$ with an arbitrary finite group of symmetries 
$$G\subset {\rm Aut} (T)=GL(M),\quad M:={\rm Char} (T)\simeq \Z^n.$$
Besides these concordant actions of $G$ on $T$ and $M$, let us choose finite sets $A_1,\ldots,A_m\subset M$ and an action of $G$ on $\{1,\ldots,m\}$ such that $g(A_i)=A_{g(i)}$ for every $g\in G$. In the space $\C^A:=\C^{A_1}\oplus\cdots\oplus\C^{A_m}$, let us consider the vector subspace $\C^A_G$ of all tuples $f=(f_1\ldots,f_m)$, such that $f_i(g(x))=f_{g(i)}(x)$. Such a tuple defines the set $\{f=0\}\subset T$, which is symmetric under the action of $G$ on $T$ and has the same geometry for almost all $f\in \C^A_G$.
This paper is devoted to the special case when $n=3,\,m=2$, and $G\simeq\Z_2$ has the only non-trivial element $I(u,v,w)=(v,u,w)$. 

Additionally, we can lift the homomorphism $G\to S_m$ to the wreath product: $\chi:G\to S_m\wr M$. It sends every $g\in G$ to a tuple $(\sigma_g,g_1,\ldots,g_m)\in S_m\times M\times\cdots\times M$ in such a way that the equality  
$$\forall i\;:\; f_i(g(x))=g_i(x)\cdot f_{\sigma_g(i)}(x)\eqno{(*)}$$
for $g=g_2g_1$ follows from the respective equalities for $g_1$ and $g_2$. We can now define $\C^A_{G,\chi}\subset\C^A$ as the subspace of all {\it semi-invariant complete intersections} $f$ satisfying the equalities $(*)$ for every $g\in G$. Such $f$ still defines a set $\{f=0\}\subset\CC^n$ symmetric under the action of $G$. We study $\{f=0\}$ for generic $f\in\C^A_{G,\chi}$. In particular, the space $C^A_G$ equals $C^A_{G,\chi}$ for the {\it trivial} map $\chi$, such that $(g_1,\ldots,g_m)=(1,\ldots,1)$ for every $g\in G$.

For instance, if $A\subset\Z^3$ is symmetric ($A=IA$), we can study surfaces $\{f=0\}\subset\CC^3$ given by a generic equation with a symmetry $f(Ix)=f(x)$ or $f(Ix)=-f(x)$. In both cases, $G=\Z^2$, but the first case corresponds to trivial $\chi$, while the second one to $(g_1)=(-1)$ for the nontrivial element $g\in \Z_2$.

\vspace{1ex}

{\bf The general setting.} More generally (but essentially for the same price) one can study generic systems of equations with symmetries that not necessarily imply the symmetry of their solutions (such as a system $f=g=0$ in which $f(Ix)=f(x)$ and $g(Jx)=g(x)$ for two different symmetries $I$ and $J$: note that the set of solutions of this system does not have to be symmetric with respect to $I$ or $J$). Namely, choose a finite group
$$G\subset S_m\wr{\rm Aff}\,\Z^n$$
and its homomorphism $\chi:G\to S_m\wr\Z^n$.
Here ${\rm Aff}\,\Z^n$ is the group of affine automorphisms of the lattice $\Z^n$, so an element $g\in G$ defines a permutation $\sigma$ of $\{1,\ldots,m\}$ together with automorphisms $\rg_1,\ldots,\rg_m\in{\rm Aff}\,\Z^n$ and monomials $\mu_1,\ldots,\mu_m\in\Z^n$.

The collection of support sets $A_1,\ldots,A_m\subset\Z^n$ is said to be symmetric with respect to the group $G$, if, for every element $(\sigma,\rg_1,\ldots,\rg_m)$ of the group we have $$A_{\sigma(i)}=\rg_i A_i,\,i=1,\ldots,m.$$
Sending every monomial $m\in A_i$ to the monomial $\rg_i(m)\in A_{\sigma(i)}$ extends by linearity to the map $\C^{A_i}\to\C^{A_{\sigma(i)}}$ that we shall denote by $\rg_i$ as well.

For a group of symmetries $G$, its given homomorphism $\chi$, and a symmetric collection $A=(A_1,\ldots,A_m)$, we can consider the space $\C^A_{G,\chi}$ of all collections of polynomials $f_i\in\C^{A_i},\,i=1,\ldots,m$, symmetric with respect to every element $g=(\sigma,\rg_1,\ldots,\rg_m)\in G,\,\chi(g)=(\sigma,g_1,\ldots,g_m)$, in the sense that $$\rg_i(f_i)=g_i(x)\cdot f_{\sigma(i)},\,i=1,\ldots,m.$$
\begin{prb}\label{prb0}
Describe the geometry of the algebraic set $f=0$ in the torus $\CC^n$ for generic $f\in\C^A_{G,\chi}$.
\end{prb}
This paper is devoted to the special case when $n=3,\,m=2$, and $G\simeq\Z_2$ has the only non-trivial element $((1,2),I,I),\,I(u,v,w)=(v,u,w)$. We now discuss what happens as we push the (numerous) boundaries of this setting, from simple to complex.

\vspace{1ex}

{\bf Involution-symmetric spatial curves: different involutions.} Even for $n=3$ (an all the more so for higher dimensions), we have many (though finitely many) different symmetry groups $G$ isomorphic to $\Z/2\Z$. They may differ both by lattice authomorphisms (such as $((1,2),J,J)$ for $J(u,v,w)=(-u,v,w)$ or $(-u,-v,w)$) and permutations (such as $({\rm Id},I,I),\,I(u,v,w)=(v,u,w)$). The examples in the parentheses lead to other kinds of spatial symmetric curves in $\CC^3$. The first and the last ones are similar to the one that we have studied, but the second one is already quite different, because the diagonal is not a (hyper)plane, and our elementary trick with passing from $f(u,v,w)=f(v,u,w)=0$ to $(f(u,v,w)-f(v,u,w))/(u^d-v^d)=0$ won't work.

\vspace{1ex}

{\bf Involution-symmetric complete intersections in higher dimension.} As the dimension $n$ grows, even for the same number of equations $m=2$ we have more and more involutions of the lattice $\Z^n$. To deal with this diversity, it is important that most of the involutions of $\Z^n$ come from involutions of smaller lattices $\Z^k$, multiplying them by $\Z^{n-k}$ with a trivial authomorphism. 
\begin{prb}
1) To what extent one could deduce from the solution of Problem \ref{prb0} for an action of $G$ on $\Z^k$ the solution of the same problem for the trivial extension of this action on $\Z^k\oplus\Z^{n-k}$?

2) In particular, to what extend one could deduce from the results of the present paper similar irreducibility results on complete intersections $f(x_1,x_2,x_3,\ldots,x_n)=f(x_2,x_1,c_3,\ldots,x_n)=0$ for any $n$?
\end{prb}
We expect the combinatorial part of the answer to the second question to be more challenging than the geometric one (for the geometric part, see e.g. Proposition 5.1 in \cite{crit} that is already done for arbitrary $n$).

\vspace{1ex}

{\bf Involution-symmetric complete intersections in higher codimension.} As soon as $m>2$, Problem \ref{prb0} becomes drastically more complicated even for our simplest involution $I(x_1,x_2,x_3,\ldots,x_n)=(x_2,x_1,c_3,\ldots,x_n)$. E.g. even for $m=n=4$ (so that the proper part of our complete intersection $f=0$ is 0-dimensional), it is already far from trivial to count the points in the proper part, because the diagonal part has the ``wrong'' dimension 1, and because this complete intersection is not sch\"on in the sense of \cite{schon} and \cite{crit}. In case $A_i$'s belong to a coordinate hyperplane, this problem is equivalent to counting the self-intersection of a plane projection of a spatial complete intersection curve (see \cite{voorhaar}).
\begin{prb}
Generalize \cite{voorhaar} to the case when $A_i$'s do not belong to a hyperplane.
\end{prb}

The study of involution-symmetric generic complete intersections incorporates many other natural questions as special cases: for instance, the classification of collections of lattice polytopes $A_1,\ldots,A_n\subset\Z^n$ of mixed volume 1 (\cite{eg11}) is the classification of generic complete intersections of the form $$y_i-f_i(x_1,\ldots,x_n)=y_i-f_i(x'_1,\ldots,x'_n)=0,\,f_i\in\C^{A_i},\,i=1,\ldots,n,$$
which are exceptional in the sense that their proper part is empty (the involution is $I(x,x',y)=(x',x,y)$ here). This extends to higher values of the mixed volume once we proceed from involutions to more complicated symmetries.

\vspace{1ex}

{\bf Beyond involutions: Schur polynomials.} For every dimension $n$ and every number of equations $m$, we have finitely many possible finite subgroups $G$ (up to conjugation). Solving Problem \ref{prb0} for each of them even in the simplest case of a planar curve ($m=1,n=2$) is highly non-trivial. Let e.g. $G\subset S_1\wr{\rm Aff}\,\Z^2={\rm Aff}\,\Z^2$ be the group of symmetries $S_3$ of the standard hexagon $\{|u|\leqslant 1, |v|\leqslant 1,|u+v|\leqslant 1\}$, and let $\chi(\sigma)=\sgn\sigma$.

The simplest $A$ invariant under this action of $S_3$ consists of 6 points, and 
$f\in\C^A_{S_3,\sgn}$ for such $A$, up to a monomial multiplier, has the form $$\det\begin{pmatrix}
1 & 1 & 1\\
1 & x^a & x^b\\
1 & y^a & y^b
\end{pmatrix}.$$
Modulo the multiplier $(x-y)(x-1)(y-1)$, this is a Schur polynomial. Irreducibility of Schur polynomials is an important recent result in \cite{dz}. From it, similarly to the methods of Sections \ref{sconn} and \ref{sirredproof}, one can make conclusions about singularities and components of the curve $f=0$ for generic $f\in\C^A_{S_3,\sgn}$ with an arbitrary $S_3$-symmetric support set $A$, thus approaching the following problem.
\begin{prb}\label{prb4}
1) Understand the components, the singularities and the genus of an $S_3$-symmetric plane curve given by a generic polynomial $f\in\C^A_{S_3,\sgn}$ for an arbitrary $S_3$-symmetric support set $A$.

2) Do the same for $f\in\C^A_{S_3,1}$ (starting from the case of the ``Schur permanent'' for a 6-point support set $A$).

3) Extend this to symmetries of tori of higher dimension, related in a similar way to Schur polynomials and permanents of more variables.
\end{prb}
Since the other groups of symmetries of $\Z^2$ are abelian and thus much simpler than $S_3$, this sketches the approach to completely understand plane symmetric curves (in our sense) for all possible symmetries of $\Z^2$.

\vspace{1ex}

{\bf Irreducibility vs planarity.} The following conjecture may be key to solving Problem \ref{prb0}.
\begin{defin} An irreducible component of the set $f=0$ for $f\in\C^A_{G,\chi}$ is:

1) said to be planar, if it can be shifted to a proper subtorus of $\CC^n$;

2) said to be diagonal, if it can be shifted to a proper subtorus of $\CC^n$ invariant under the action of some non-unit element $g\in G$. Non-diagonal components are called proper.
\end{defin}
Consider {\it exceptional} and {\it planar} support sets:

$\mathcal{E}_{G,\chi}:=\{A \,|\,$ for a generic $f\in\C^A_{G,\chi}$ the set $f=0$ has more than one proper component$\}$;

$\mathcal{P}_{G,\chi}:=\{A \,|\,$ for a generic $f\in\C^A_{G,\chi}$ all components of $f=0$ are planar$\}$.
\begin{conj}[cf Question 1.5 in \cite{schon}]\label{conjflat}
$\mathcal{E}_{G,\chi}\subset\mathcal{P}_{G,\chi}$.
\end{conj}

The conjecture (even if confirmed only for some dimensions and symmetry groups) may greatly facilitate the classification of respective exceptional support sets, since planar support sets are much more attainable to classify.

\begin{rem} For an idea of how to approach the classification of planar support sets, see Proposition \ref{pflat} devoted to this classification in our simplest case ($n=3,G=\Z_2,\chi=\id$).

For an idea why (philosophically) planar support sets are much more attainable to classify than exceptional support sets, one may notice that irreducibility is a global property of an algebraic set, while having every irreducible component planar is a local property.\end{rem}

\vspace{1ex}

{\bf Other ground fields.} Over $\R$, already the symmetry $I(u,v)=(v,u)$ for plane curves of a given small degree is difficult and important to study, see e.g. \cite{os} and \cite{bru} for surveys and key results. 
\begin{prb}
It would be interesting to extend these classification techniques to real plane curves of small degree with the other symmetry groups ($\Z/3\Z$, $S_3$, etc.).
\end{prb}

Another natural question is to find a way to constructively (not appealing e.g. to the Lefschetz transfer principle) extend our results to arbitrary algebraically closed fields.
First of all, this would require to eliminate topological methods in our paper.

We use topology in two key points. First, it is the Morse theory in the proof of the irreducibility in Theorem \ref{thsingnum}, if we trace it back to \cite{schon}.
An alternative cohomological technique is the Dolbeault cohomology computation, as it is done for nondegenerate complete intersection in \cite{kh15}. In our case, this boils down to analyzing the Dolbeault cohomology of the toric variety with coefficients twisted by the line bundle, corresponding to the divisor of the rational function $f(x)/(x_1^d-x_2^d)$ (to which we apply the Morse theory over $\C$ in \cite{schon}). This is more combinatorially challenging, because the line bundle is not ample, but still feasible.

Another topological key step in our paper is when we deduce Proposition \ref{propind} from Proposition \ref{pflat}. Instead of this, we can notice that the reducibility of the proper part of the symmetric curve is preserved when we switch from the ground field $\C$ to the field of Puiseaux series. Over this field, in the notation of Proposition \ref{propind}, we can consider a generic symmetric curve given by the polynomial $f\in\C^{A'}$ whose coefficients have trivial valuation except for the coefficient of the monomial $a$. For the corresponding curve $f=f\circ I=0$, we can see the absence of components with the non-trivial toric span $T$, looking at its tropicalizations. It follows from a non-trivial, but purely combinatorial 
\begin{observ}\label{remtrop1}
If the intersection of tropical fans of $f=0$ and $f\circ I=0$ (for $f$ as above) contains a 1-dimensional tropical curve avoiding $0$, then this tropical curve is contained in a plane.
\end{observ}
\begin{sledst}
Proposition \ref{propind}.1 is valid over any field of characteristic 0.
\end{sledst}
We can summarize this outline as follows.
\begin{prb}
1) Prove the irreducibility in theorem \ref{thsingnum} computing the relevant Dolbeault cohomology.

2) Deduce from this the irreducibility of the proper part of an arbitrary symmetric curve by tropical methods, as in Observation \ref{remtrop1}.

3) More generally, given a generic enough symmetric complete intersection over the field of Puiseaux series, to what extent can one tell the topology and singularities of its proper part from its tropicalization?
\end{prb}

\vspace{1cm}
\noindent
A. Esterov\\
London Institute for Mathematical Sciences, UK\\
\textit{Email}: aes@lims.ac.uk\\

\noindent
L. Lang\\
Department of Electrical Engineering, Mathematics and Science, University of G\"avle, Sweden\\
\textit{Email}: lionel.lang@hig.se


\begin{thebibliography}{xxxxxxxx}

\bibitem[A57]{abh1}
S. S. Abhyankar, {\it Coverings of algebraic curves}, Amer. J. Math. 79 (1957), 825–-856.

\bibitem[ASP92]{abh2}
S. S. Abhyankar, W. K. Seiler, and H. Popp, {\it Mathieu group coverings of
the affine line}, Duke Math. J. 68 (1992), no. 2, 301 –- 311.


\bibitem[A78]{asing}
V. I. Arnold, {\it Critical points of functions on a manifold with boundary, the simple Lie groups $B_k$, $C_k$, $F_4$ and singularities of evolutes}, Russian
Math. Surveys 33 (1978) 99-116

\bibitem[BB94]{bb94}
V. Batyrev, L. Borisov, {\it On Calabi-Yau Complete Intersections in Toric Varieties}, in Higher dimensional complex varieties, 39–65, de Gruyter, Berlin, 1996, arXiv:alg-geom/9412017

\bibitem[B94]{bayley}
L. Bayle. {\it Classification des vari\'et\'es coplexes projectives de dimension trois dont une section hyperplane g\'en\'erale est une surface d’Enriques}, J. Reine Angew. Math. 449 (1994) 9-64

\bibitem[B75]{bernst} 
D. N. Bernstein, {\it The number of roots of a system of equations}, Functional Anal. Appl. 9 (1975) 183--185. 

\bibitem[B04]{bru}
E. Brugall\'e, {\it Symmetric plane curves of degree 7: pseudoholomorphic and algebraic classifications}, J. Reine Angew. Math. 612 (2007), arXiv:math/0404030


\bibitem[CKP14]{ckp}
T. Coates, A. Kasprzyk, T. Prince, {\it Four-dimensional Fano toric complete intersections}, Proc. R. Soc. A 471 (2015) 20140704, arXiv:1409.5030.

\bibitem[DZ09]{dz}
R. Dvornicich U. Zannier, {\it Newton functions generating symmetric fields and irreducibility of Schur polynomials}, Adv. in Math. 222 (2009) 1982--2003.

\bibitem[EJM23]{ejm}
L. Esser, L. Ji, J. Moraga
Symmetries of Fano varieties, arXiv:2308.12958

\bibitem[EG15]{eg11} A. Esterov, G. Gusev, {\it Systems of equations with a single solution}, Journal of symb. comput. 68-2 (2015) 116-130, arXiv:1211.6763

\bibitem[E18]{compos} A. Esterov, {\it Galois theory for general systems of polynomial equations}, Compositio Mathematica. 155 (2019) 229-245, 	arXiv:1801.08260 

\bibitem[ESV23]{esv} A. Esterov, E. Statnik, A. Voorhaar, {it Sparse curve singularities, singular loci of resultants, and Vandermonde matrices}, arXiv:2301.07001

\bibitem[EL22]{el}
A. Esterov, L. Lang, {\it Permuting the roots of univariate polynomials whose coefficients depend on parameters}, arXiv:2204.14235


\bibitem[E24s]{schon} A. Esterov, {\it Sch\"on complete intersections}, arXiv:2401.12090

\bibitem[E24e]{crit} A. Esterov, {\it Engineered complete intersections: slightly degenerate Bernstein--Kouchnirenko-Khovanskii}, arXiv:2401.12099






\bibitem[FNP22]{fnp}
D. Festi, W. Nijgh, D. Platt, {\it
K3 surfaces with two involutions and low Picard number},  Geometriae Dedicata, 218 (2024) 55, 	arXiv:2210.14623

\bibitem[GKZ]{gkz}
I.M. Gelfand. M.M. Kapranov. A.V. Zelevinsky, {\it Discriminants, Resultants and Multidimensional Determinants}, Springer (1994).

\bibitem[GR19]{gr19}
S.M. Gusein-Zade,  A.-M.Ya. Rauch, {\it
On simple $\Z_2$-invariant and corner function germs}, 107(2020) 939-945, arXiv:1907.01087

\bibitem[H79]{harris}
J. Harris, {\it
Galois groups of enumerative problems}, 
Duke Math. J. 46(1979) 685-724

\bibitem[HK20]{hk}
S. Hashimoto and B. Kadets, {\it 38406501359372282063949 and all that: Monodromy of Fano problems}, International Mathematics Research Notices (2020) rnaa275, arxiv:2002.04580



\bibitem[HLLY20]{yau}
S. Hosono, T.-J. Lee, B. H. Lian, S.-T. Yau,
{\it Mirror symmetry for double cover Calabi--Yau varieties}, J. Differential Geom. 127 (2024) 409-431, 	arXiv:2003.07148


\bibitem[IS00]{itenbpatch}
I. Itenberg, E. Shustin, {\it
Singular points and limit cycles of planar polynomial vector fields},
Duke Math. J 102 (2000) 1-37

\bibitem[K13]{karp}
O. Karpenkov, {\it Geometry of Continued Fractions}, Springer (2013).



\bibitem[Kh78]{kh75} 
A. G. Khovanskii, {\it Newton polyhedra and the genus of complete intersections}, Functional Anal. Appl., 12 (1978) 38--46





\bibitem[Kh16]{kh15}  
A. G. Khovanskii, {\it Newton polytopes and irreducible components of complete intersections}, Izvestiya: Mathematics, 80 (2016) 263--284

\bibitem[K75]{k75} 
A. G.  Kouchnirenko, {\it Poly\'edres de Newton et nombres de Milnor}, Invent. Math. 32 (1976) 1-31




\bibitem[OS02]{os}
S. Orevkov, E. Shustin, {\it Flexible, algebraically unrealizable curves: rehabilitation of Hilbert-Rohn-Gudkov approach}, J. Reine Angew. Math., 551 (2002) 145–172

\bibitem[P21]{prokh}
Yu. Prokhorov, {\it Embeddings of the Symmetric Groups to the Space Cremona Group}, Birational Geometry, Kähler–Einstein Metrics and Degenerations (2019) 749-762, arXiv:2112.04175


\bibitem[P03]{pukhl}
A V Pukhlikov, {\it Birationally rigid iterated Fano double covers}, Izv. Math. 67 (2003) 555,	arXiv:math/0310268

\bibitem[S81]{ssing}  
D. Siersma, {\it Singularities of functions on boundaries, corners, etc.}, Quart. J. Math. Oxford Ser. (2) 32 (1981) 119-127

\bibitem[SW13]{sw} 
F. Sottile, J. White, {\it 
Double transitivity of Galois groups
in Schubert calculus of Grassmannians}, Algebraic Geometry 2 (2015) 422-445, 	arXiv:1312.5987



\bibitem[S94]{sturmfpatch}
B. Sturmfels, {\it Viro’s theorem for complete intersections}, Annali della Scuola Normale Superiore di Pisa, Classe di Scienze 21 (1994) 377-386

\bibitem[V76]{varch} 
A. N. Varchenko,  {\it Zeta-function of monodromy and Newton's diagram}, Invent Math 37 (1976) 253-262 

\bibitem[V83]{viropatch}
O. Viro, {\it Patchworking real algebraic varieties}, arXiv:math/0611382

\bibitem[V19]{voorhaar}
A. Voorhaar, {\it On the Singular Locus of a Plane Projection of a Complete Intersection}, Math Z 301 (2022) 3571-3607, arXiv:1910.06626
\end{thebibliography}
\end{document}